\definecolor{darkred}{RGB}{150, 0, 0}
\definecolor{darkgreen}{RGB}{0, 150, 0}
\definecolor{darkblue}{RGB}{0, 0, 150}
\Crefname{theorem}{Thm.}{Thms.}
\Crefname{proposition}{Prop.}{Props.}
\Crefname{lemma}{Lem.}{Lems.}
\Crefname{section}{Sec.}{Secs.}
\Crefname{appendix}{Sec.}{Secs.}
\Crefname{definition}{Def.}{Defs.}
\newcommand{\reals}{\ensuremath{\mathds{R}}}
\newcommand{\Q}{\ensuremath{\mathds{Q}}}
\newcommand{\integers}{\ensuremath{\mathds{Z}}}
\newcommand{\feasregion}{\ensuremath{F}\xspace}
\newcommand{\initcore}{\ensuremath{\mathcal{F}}\xspace}
\newcommand{\define}{\ensuremath{\coloneqq}}
\newcommand{\bnbtree}{\ensuremath{\mathcal T}\xspace}
\newcommand{\bnbnodes}{\ensuremath{V}\xspace}
\newcommand{\bnbedges}{\ensuremath{E}}
\newcommand{\optprob}{\ensuremath{\mathrm{OPT}(f, \feasregion)}\xspace}
\newcommand{\powset}{\ensuremath{\mathcal{P}}}
\newcommand{\core}{\ensuremath{\mathcal{C}}\xspace}
\newcommand{\derived}{\ensuremath{\mathcal{D}}\xspace}
\newcommand{\derive}{\derived}
\newcommand{\assumed}{\ensuremath{\mathcal{A}}\xspace}
\newcommand{\sym}[1]{\ensuremath{\mathcal{S}_{#1}}}
\newcommand{\perm}{\gamma}
\newcommand{\rowperm}{\pi}
\newcommand{\group}{\Gamma}
\newcommand{\inv}[1]{{#1}^{-1}}
\newcommand{\invperm}{\inv{\perm}}
\newcommand{\orb}[2]{\mathrm{orb}_{#1}\left(#2\right)}
\newcommand{\T}{^T}
\newcommand{\sprod}[2]{{#1}\T{#2}}
\newcommand{\ie}{i.e.,\xspace}
\newcommand{\eg}{e.g.,\xspace}
\newcommand{\implication}{\rightsquigarrow}
\newcommand{\children}{\mathop{\delta^{+}}\nolimits}
\newcommand{\dcn}{\mathop{\rm dcn}}
\newcommand{\bradec}{\ensuremath{\mathcal B}}
\newcommand{\bnbdec}{\ensuremath{B}}
\newcommand{\bnbroot}{r}
\newcommand{\emptylist}{(\,)}
\newcommand{\Cconsistent}{\mbox{$\mathcal{C}$-consistent}\xspace}
\newcommand{\solleaf}{\mathop{\rm leaf}}
\newcommand{\lexgeq}{\succeq}
\newcommand{\stricteps}{\epsilon}
\newcommand{\tgeq}[1][\bnbtree]{\succeq_{\stricteps,#1}}
\newcommand{\lexgteps}{\succ_\stricteps}
\newcommand{\tgt}{\succ_{\stricteps,\bnbtree}}
\newcommand{\primbound}{z}
\newcommand{\config}{\ensuremath{\mathfrak{C}}\xspace}
\newcommand{\configdef}{\big(%
  \core,
  \derived,
  g,
  \primbound,
  \bnbtree,
  \stricteps
  \big)}
\newcommand{\complem}[1]{\overline{#1}}
\newcommand{\sgeq}{\trianglerighteq}
\newcommand{\sleq}{\trianglelefteq}
\newcommand{\card}[1]{|#1|}
\DeclareMathOperator{\sign}{sgn}
\newcommand{\down}[1]{\left\lfloor #1 \right\rfloor}
\newcommand{\ourparagraph}[1]{%
  \medskip

  \noindent\emph{#1}
}
\begin{document}

\title{A proof system for certifying symmetry and optimality reasoning
  in integer programming
  \thanks{The work for this article has been conducted within the Research Campus Modal funded by the German Federal Ministry of Education and Research (BMBF grant numbers 05M14ZAM, 05M20ZBM).}}
\titlerunning{A proof system for certifying symmetry and optimality reasoning}

\author{Jasper van Doornmalen\inst{1} \and
  Leon Eifler\inst{2} \and
  Ambros Gleixner\inst{2,3} \and
  Christopher Hojny\inst{1}
}
\authorrunning{J.\ van Doornmalen et al.}

\institute{TU Eindhoven, The Netherlands,
  \email{\{m.j.v.doornmalen,c.hojny\}@tue.nl}
  \and Zuse Institute Berlin, Germany,
  \email{eifler@zib.de}
  \and HTW Berlin, Germany,
  \email{gleixner@htw-berlin.de}
}

\maketitle

\begin{abstract}
  We present a proof system for establishing the correctness of results produced by
  optimization algorithms, with a focus on mixed-integer programming (MIP).
  Our system generalizes the seminal work of Bogaerts, Gocht, McCreesh, and Nordström
  (2022) for binary programs to handle any additional difficulties arising from unbounded
  and continuous variables,
  and covers a broad range of solving techniques, including
  symmetry handling, cutting planes, and presolving reductions.
  Consistency across all decisions that affect the feasible region is achieved by a pair
  of transitive relations on the set of solutions, which relies on the newly introduced
  notion of consistent branching trees.
  Combined with a series of machine-verifiable derivation rules,
  the resulting framework offers practical solutions to enhance the trust in integer
  programming as a methodology for applications where reliability and correctness are key.
\end{abstract}


\section{Introduction}
\label{sec:intro}


In this paper, we are concerned with general optimization problems of the form
$\optprob \define \min\{ f(x) : x \in \feasregion \}$
for a real-valued function~$f\colon \reals^n \to \reals$
and a feasible region~$\feasregion \subseteq \reals^n$ 
defined by a set of constraints.
In particular, we are interested in the solution of mixed-integer programs (MIPs) to
optimality, for which state-of-the-art algorithms are composed of a large array
of different solving techniques~\cite{NemhauserWolsey1988,Achterberg2007,ConfortiCornuejolsZambelli2014,AchterbergBixby2019,HojnyPfetsch2019}. 
Though their correctness is proven \emph{on paper}, with today's tools it seems
prohibitively difficult to verify that they are implemented correctly \emph{in complex
software}.
In fact, there have been recurring reports of incorrect results in different types of solvers~\cite{Brummayer_Lonsing_Biere2010,Cook_et_al2013,Akgun2018MetamorphicTO,GillardSD2019}.

%
Instead of verifying correctness of an implementation, a more viable approach is to supplement the computed result with a \emph{proof of correctness}~\cite{McConnellMehlhornEtAl2011,AlkassarEtAl2011} that 
can be independently and automatically verified. This approach has long been the standard in the SAT community~\cite{HeuleHuntWetzler2014,Cruz-FilipeHeuleEtAl2017,Cruz-FilipeMarquesEtAl2017,Goldberg2003}, but there also exist proof-logging mechanisms for broader classes of problems such as in SMT solving~\cite{de2008proofs,BarbosaEtal2022}, pseudo-boolean optimization~\cite{Gocht_2019_veripb,ElffersGochtMcCreeshNordstrom_2020}, or exact MIP~\cite{VIPR}.

For MIP, no formal proof system has been developed to this date, and the solving techniques that are certifiable in~\cite{VIPR} are severely limited, not covering any methods that remove parts of the feasible region from the search space, \eg due to symmetry arguments. A seminal paper in this direction is~\cite{bogaerts2023certifiedsymbr}, which describes a proof system to handle symmetry- and redundancy-based reasoning for pseudo-boolean optimization.
Our work follows the same paradigm to develop a proof system that addresses the challenges
posed by the presence of unbounded and continuous variables.
As a result, we can certify the correctness of a wide range of static and dynamic, global and local symmetry handling methods, and many other state-of-the-art techniques from the MIP literature.

The overall strategy is to define a \emph{configuration} as a snapshot of the derivations during the solving process.
A trivial starting configuration is iteratively updated by a series of \emph{rules} that are proven to establish valid derivations, eventually certifying optimality or infeasibility of \optprob.
To that end, in \Cref{sec:proofsystem} we introduce the notion of a \emph{consistent branching tree}, which is used to prevent inconsistencies among all derivations that remove parts of the feasible region and to define \emph{validity} of a configuration.
Vaguely speaking, any valid configuration admits the same optimal objective value as \optprob.
In \Cref{sec:rules}, we present a set of validity-preserving transition rules that make it possible to certify optimality or infeasibility \emph{sequentially}.
In \Cref{sec:realization}, we illustrate the framework for many MIP-techniques within LP-based branch-and-bound
and discuss how certificates could be encoded and verified in practice. We conclude with an outlook in \Cref{sec:outlook}.

\section{Trees, Configurations, and Validity}
\label{sec:proofsystem}

The central ingredient of our proof system, as in \cite{bogaerts2023certifiedsymbr}, is a
reflexive and transitive relation on the solution space that defines a consistent
direction for symmetry handling and optimality-based reductions.
While in \cite{bogaerts2023certifiedsymbr} this preorder remains mostly abstract, we
provide a concrete construction designed to resolve the additional difficulties
introduced by the presence of continuous and unbounded variables, and to capture the
degrees of freedom due to locality in a branch-and-bound process.
We require the following notational conventions.

In the abstract setting of \Cref{sec:proofsystem,sec:rules}, we consider constraints to be
arbitrary subsets $C\in\powset(\reals^n)$, the power set of $\reals^n$;
$\complem{C} = \reals^n \setminus C$ denotes its complement.
For a set of constraints $\core\subseteq\powset(\reals^n)$, we write $\bigcap\core$ as a
shorthand for $\bigcap_{C\in\core}C$.
For $\tau = (i_1, \dots, i_{k})\in\{\pm
1,\ldots,\pm n\}^{k}$, we define by slight abuse of notation 
\[ \tau\colon\reals^n\to\reals^k, \tau(x) \define \big( \sign(i_1)\,x_{|i_1|}, \dots, \sign(i_{k})\,x_{|i_{k}|} \big), \]
where $\sign(\cdot)$ denotes the sign function,
and $\tau(C) \define \{ \tau(x) : x \in C \}$ for $C \subseteq \reals^n$.
We write $\tau \sleq \tau' = (i'_1, \dots, i'_{k'})$
if $i_j=i'_j$ for all $1\leq j \leq k \leq k'$.
%
We write~$\children(v)$ for the set of children of a node $v$ in a directed graph, and $[a] \define \{j \in \integers_+ : j \leq a \}$.

\begin{definition}
  Let $\core\subseteq\powset(\reals^n)$,
  then we call $\bnbtree = (\bnbnodes, \bnbedges, \bnbdec, \sigma)$
  a \emph{\Cconsistent branching tree}
  if the following conditions hold:
\begin{enumerate}[label={(T\arabic*)}, ref={T\arabic*},left=0pt]
\item
\label{consistent:arborescence}
$(\bnbnodes, \bnbedges)$ is an arborescence with $1 \leq \card{\bnbnodes} < \infty$
and root $\bnbroot\in\bnbnodes$.
\item
\label{consistent:branching}
Each~$v \in \bnbnodes$ has attached a branching constraint $\bnbdec_v \subseteq \reals^n$, and $\bnbdec_\bnbroot = \reals^n$.
\item 
\label{consistent:covering}
For all $u \in \bnbnodes$ with $\children(u) \neq \emptyset$ we have
$\bigcap\core \subseteq \bigcup_{v \in \children(u)} \bnbdec_v$.
\item 
\label{consistent:dupl}
Each~$v \in \bnbnodes$ has attached a 
list $\sigma_v\in\{\pm 1,\ldots,\pm n\}^{\ell_v}$, $\ell_v \in \{0,1,\ldots,n\}$,
of signed variable indices
without duplicates in $|(\sigma_v)_1|,\ldots,|(\sigma_v)_{\ell_v}|$.
\item
\label{consistent:growing}
For all $u \in \bnbnodes$, $v \in \children(u)$,
we have $\sigma_u \sleq \sigma_v$.
\item
\label{consistent:bounded}
For all $v \in \bnbnodes$ and $i\in[\ell_v]$,
we have
$
\max\{ \sigma_v(x)_i : x \in \bigcap \core \} < \infty.
$
\item
\label{consistent:packing}
For all $u\in\bnbnodes$ and $v,w \in \children(u)$, $v\not=w$, we have
$
\sigma_u(\bnbdec_v) \cap \sigma_u(\bnbdec_w) = \emptyset
$.
\end{enumerate}
\end{definition}
For a node~$u \in \bnbnodes$ with~$\children(u) \neq \emptyset$,
\eqref{consistent:covering} and~\eqref{consistent:packing} ensure that
the children of $u$ partition the feasible region of $u$;
in particular, for every~$x \in \bigcap\core$, there is a unique leaf
of~$(\bnbnodes,\bnbedges)$ whose feasible region contains~$x$, denoted
by $\solleaf(x)$, see \Cref{lemma:uniquechild,lemma:uniqueleaf} in
\Cref{sec:auxresults}.
%
Hence, for~$x,y\in\bigcap\core$, there exists a \emph{deepest
common node}~$w\eqqcolon\dcn(x,y)$ with $x$ and $y$ both contained in
the feasible region of $w$.
%
The partitioning condition~\eqref{consistent:packing} is stronger and
further implies that all $u\in V$ with $|\children(u)|\geq 2$ have
nontrivial $\sigma_u$.
By~\eqref{consistent:growing}, also all successors $v$ have
$\ell_v\geq \ell_u\geq 1$.
The boundedness condition~\eqref{consistent:bounded} holds if all
variables contained in at least one $\sigma_v$ have finite upper
(lower) bound if contained with positive (negative) index.
%


\Cref{lem:preorder} in \Cref{sec:auxresults} shows that the relations~$\tgeq$ and $\tgt$
defined next constitute a preorder resp. a strict order on the set of solutions
in~$\bigcap\core$:
%
\begin{definition}
\label{def:constree}
Let $\bnbtree$ be a \Cconsistent branching tree, $\stricteps > 0$, and~$x, y \in \reals^n$.
Let~$\lexgeq$ and $\lexgteps$ denote the standard resp.~a strict lexicographic order given by
\vspace{-1.5ex}
\begin{align*}
  x \lexgeq y :\Leftrightarrow\;
  &x=y \lor x_i > y_i \text{ for the smallest index $i$ with $x_i\not=y_i$},\\
  x \lexgteps y :\Leftrightarrow\;
  &x\not=y \land x_i \geq y_i + \stricteps \text{ for the smallest index $i$ with $x_i\not=y_i$}.
  \shortintertext{Using $v=\dcn(x,y)$, we then define the relations $\tgeq$ and $\tgt$ over $\bigcap\core$ by}
  x \tgeq y :\Leftrightarrow\;
     &\sigma_{v}(x) = \sigma_{v}(y) \lor \sigma_{v}(x) \lexgteps \sigma_{v}(y),\\
  x \tgt y :\Leftrightarrow\;
     &\sigma_{v}(x) \lexgteps \sigma_{v}(y).
\end{align*}
\end{definition}
With this, we arrive at the following central notion of a \emph{configuration}:
\begin{definition}
\label{def:validconf}
Let $\core,\derived\subseteq\powset(\reals^n)$, $g\colon\reals^n \to \reals, z \in \reals \cup \{\infty\}, \epsilon > 0$,
and let $\bnbtree$\! be a branching tree.
A configuration $\configdef$ is called \emph{$(\feasregion, f)$-valid} if:
\begin{enumerate}[label={(V\arabic*)}, ref={V\arabic*},left=0pt]
\item 
\label{cond:consistent}
$\bnbtree$ is \Cconsistent and $\stricteps > 0$.
\item 
\label{cond:obj}
If $\primbound < \infty$, there exists an $x \in \feasregion$ with $f(x) \leq \primbound$.
\item 
\label{cond:feasC}
For all $\hat\primbound < \primbound$,
there exists an $x \in \feasregion$
with~$f(x) \leq \hat\primbound$
if and only if
there exists an~$x \in \bigcap\core$
with~$g(x) \leq \hat\primbound$.
\item
\label{cond:derive}
For every $x \in \bigcap\core$ with $g(x) < \primbound$, there exists a $y \in \bigcap (\core \cup \derived)$ with $y \tgeq x$ and $g(y) \leq g(x)$.
\end{enumerate}
\end{definition}
As in \cite{bogaerts2023certifiedsymbr}, we call $\core$ the set of \emph{core} and
$\derived$ the set of \emph{derived constraints}.
\Cref{thm:goalconfig} states that the goal---to produce a valid configuration containing the
contradiction $\emptyset\in\derived$---yields a certificate of optimality (if
$z < \infty$) or infeasibility (if $z=\infty$).
While it seems out of reach to produce such a configuration and check its validity in one
step, \Cref{thm:initvalid} provides a trivially valid starting configuration~$\config^0$.
A proof in our system then becomes a \emph{sequence of valid configurations} starting
at~$\config^0$ and transitioning to richer configurations by iteratively applying a set of
\emph{machine-verifiable rules} that are \emph{proven to preserve validity} in~\Cref{sec:rules}.

\begin{theorem}
  \label{thm:goalconfig}
  Let $\configdef$ be an $(\feasregion, f)$-valid configuration with~$\emptyset \in
  \derived$.
  Then there exists no solution $x \in \feasregion$ with $f(x) < \primbound$.
\end{theorem}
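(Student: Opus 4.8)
The plan is to derive a contradiction from assuming there is a feasible point with objective strictly below the primal bound, by chaining the four validity conditions. First I would suppose, for contradiction, that some $x^\ast \in \feasregion$ satisfies $f(x^\ast) < \primbound$. Pick any rational (or real) $\hat\primbound$ with $f(x^\ast) \le \hat\primbound < \primbound$; then the ``only if'' direction of \eqref{cond:feasC} produces a point $x \in \bigcap\core$ with $g(x) \le \hat\primbound < \primbound$. Now \eqref{cond:derive} applies to this $x$, yielding a $y \in \bigcap(\core \cup \derived)$ with $y \tgeq x$ and $g(y) \le g(x) < \primbound$.

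The key observation is that $\emptyset \in \derived$ forces $\bigcap(\core \cup \derived) \subseteq \bigcap\derived \subseteq \emptyset$, so $\bigcap(\core \cup \derived) = \emptyset$. This directly contradicts the existence of the point $y$ produced in the previous step. Hence no such $x^\ast$ can exist, which is the claim.

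The only subtlety to handle carefully is the edge case $\primbound = \infty$: one must check that \eqref{cond:feasC} still yields a point in $\bigcap\core$. If $f(x^\ast)$ is finite, choose $\hat\primbound \define f(x^\ast)$, which is a real number and trivially satisfies $\hat\primbound < \infty = \primbound$, so \eqref{cond:feasC} applies verbatim with this $\hat\primbound$; the rest of the argument goes through unchanged. I would spell this out so that the infeasibility case ($z = \infty$) is visibly covered by the same chain of implications. I do not anticipate any real obstacle here: the proof is a short syntactic chaining of the definitions, and the ``main difficulty'' is merely making sure the choice of $\hat\primbound$ is legitimate in both the finite and infinite regimes and that $\tgeq$ is only ever invoked on points of $\bigcap\core$, as required by \Cref{def:constree} — which holds because $x \in \bigcap\core$ by construction.
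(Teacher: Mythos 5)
Your proposal is correct and follows essentially the same argument as the paper: invoke \eqref{cond:feasC} to pass from a hypothetical improving $x^\ast\in\feasregion$ to a point of $\bigcap\core$ below $\primbound$, then \eqref{cond:derive} to obtain a point of $\bigcap(\core\cup\derived)$, contradicting $\bigcap(\core\cup\derived)\subseteq\bigcap\derived=\emptyset$. The extra care you take with the choice of $\hat\primbound$ and the case $\primbound=\infty$ is a harmless elaboration of the same proof.
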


\begin{proof}
  Suppose there is an $(\feasregion, f)$-valid configuration
  $\configdef$ with $\emptyset \in \derived$, and $x \in \feasregion$ with
  $f(x) = \hat\primbound < \primbound$.
  By~\eqref{cond:feasC}, there is~$\hat x \in \bigcap\core$ with $g(\hat x) \leq
  \hat\primbound < \primbound$, and by~\eqref{cond:derive}, there exists a $y \in
  \bigcap(\core \cup \derived)$.
  This contradicts~$\bigcap(\core \cup \derived) \subseteq \bigcap\derived = \emptyset$.
  \qed
\end{proof}

\begin{theorem}
  \label{thm:initvalid}
  Let $\initcore\subseteq\powset(\reals^n)$, $F = \bigcap\initcore$, and
  \mbox{$f\colon\reals^n\to\reals$}.
  Then the \emph{initial configuration} $\config^0\define\big(\initcore, \emptyset, f, \infty, (\{\bnbroot\},
  \emptyset, \{\bnbroot\mapsto\reals^n\}, \{\bnbroot\mapsto\emptylist\}),1\big)$ is
  $(\feasregion, f)$-valid.
\end{theorem}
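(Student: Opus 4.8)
The plan is to verify the four conditions~\eqref{cond:consistent}--\eqref{cond:derive} of $(\feasregion,f)$-validity directly for $\config^0$, exploiting that $\feasregion = \bigcap\initcore$, that $\core = \initcore$ and $\derived = \emptyset$ (whence $\bigcap(\core\cup\derived) = \bigcap\core = \feasregion$), that $g = f$, and that $\primbound = \infty$. Each of the four conditions then turns out to be either vacuous or immediate.

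For~\eqref{cond:consistent} I would first confirm that the one-node tree $\bnbtree^0$ is \Cconsistent. Condition~\eqref{consistent:arborescence} holds since a single root is an arborescence with $1 \leq 1 < \infty$; \eqref{consistent:branching} holds as $\bnbdec_\bnbroot = \reals^n$ by construction; \eqref{consistent:dupl} holds since $\sigma_\bnbroot = \emptylist$ has length $\ell_\bnbroot = 0$ and thus (vacuously) no repeated indices; conditions~\eqref{consistent:covering}, \eqref{consistent:growing} and~\eqref{consistent:packing} are vacuous because $\children(\bnbroot) = \emptyset$; and~\eqref{consistent:bounded} is vacuous because $[\ell_\bnbroot] = [0] = \emptyset$. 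Together with $\stricteps = 1 > 0$ this gives~\eqref{cond:consistent}. Condition~\eqref{cond:obj} is vacuous since $\primbound = \infty$ is not finite.

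For~\eqref{cond:feasC}, observe that for every $\hat\primbound < \primbound$ the two statements ``there is $x \in \feasregion$ with $f(x) \leq \hat\primbound$'' and ``there is $x \in \bigcap\core$ with $g(x) \leq \hat\primbound$'' are literally identical, since $\feasregion = \bigcap\initcore = \bigcap\core$ and $g = f$; hence the biconditional holds. For~\eqref{cond:derive}, given $x \in \bigcap\core$ with $g(x) < \primbound$, I would take $y \define x$: then $y \in \bigcap(\core\cup\derived) = \bigcap\core$, we have $g(y) = g(x) \leq g(x)$, and $x \tgeq x$ because $\tgeq$ is reflexive on $\bigcap\core$ by \Cref{lem:preorder}, which applies now that $\bnbtree^0$ has been shown \Cconsistent. (Alternatively, reflexivity can be checked by hand: the only leaf of $\bnbtree^0$ is $\bnbroot$, so $\dcn(x,x) = \bnbroot$, and $\sigma_\bnbroot(x) = \sigma_\bnbroot(x)$ holds trivially since $\sigma_\bnbroot = \emptylist$.)

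There is essentially no hard step in this argument. The only points that deserve a moment's care are to confirm that each tree axiom quantifying over the children of $\bnbroot$ or over the index set $[\ell_\bnbroot]$ is genuinely vacuous for the single-node tree, and that \Cref{lem:preorder} is legitimately invoked in~\eqref{cond:derive}, i.e.\ only after $\bnbtree^0$ has been shown \Cconsistent, so that there is no circularity; both are immediate from the definition of $\config^0$.
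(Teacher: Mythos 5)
Your proposal is correct and follows essentially the same route as the paper's own proof: check that the single-node tree is trivially \Cconsistent, note that \eqref{cond:obj} is vacuous for $\primbound=\infty$, that \eqref{cond:feasC} is immediate from $\feasregion=\bigcap\initcore$ and $g=f$, and that $y=x$ witnesses \eqref{cond:derive} since $\derived=\emptyset$. You merely spell out the vacuity of the individual tree axioms and the reflexivity of $\tgeq$ in more detail than the paper, which simply asserts these as trivial.
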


\begin{proof}
  \eqref{cond:obj} holds due to $\primbound = \infty$ and
  \eqref{cond:feasC} because~$\bigcap\core = \bigcap\initcore =
  \feasregion$ and $g=f$.
  Since $\derived=\emptyset$, we can always choose $y=x$ to satisfy \eqref{cond:derive}.
  Finally, the initial tree~$(\{\bnbroot\}, \emptyset, \{\bnbroot\mapsto\reals^n\},
  \{\bnbroot\mapsto\emptylist\})$ consists only of a root node~$\bnbroot$ with
  $\bnbdec_r=\reals^n$ and empty $\sigma_r = \emptylist$.
  This tree trivially conforms to \Cref{def:constree}, and $\epsilon=1>0$.
  \qed
\end{proof}

\section{Validity-Preserving Transition Rules}
\label{sec:rules}

The first two rules feature a special type of constraint called an
\emph{implication}.
%
Let $\assumed\subseteq\powset(\reals^n)$ be a collection of \emph{assumptions} and $C\subseteq
\reals^n$, then we define $[\assumed \implication C] \define
\complem{\bigcap \assumed} \cup C$, \ie $x \in [\assumed \implication C]$ if and only if $(\, x
\in \bigcap \assumed \Rightarrow x \in C\, )$. \linebreak In particular, any constraint $C$ can be written as
the implication $[\reals^n \implication C]$.


\ourparagraph{Implicational derivation rule.}
This first rule allows to derive constraints that preserve
all feasible, improving solutions in a subset of $\reals^n$, i.e., under a (possibly
empty) set of assumptions $\assumed$:
\begin{theorem}
  \label{thm:implic}
  Let $\configdef$ be an $(\feasregion, f)$-valid configuration.
  Let $C\subseteq\reals^n$ and $\assumed\subseteq\powset(\reals^n)$.
  Then $\big( \core, \derived \cup \{[\assumed \implication C]\}, g, \primbound, \bnbtree,
  \stricteps \big)$ is also $(\feasregion, f)$-valid, if
  \begin{align}
    \label{cond:implic}
    C \supseteq \bigcap (\core \cup \derived \cup \assumed)
    \cap
    \{ x \in \reals^n : g(x) < \primbound \}.
  \end{align}
\end{theorem}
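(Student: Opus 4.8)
The plan is to check the four conditions \eqref{cond:consistent}--\eqref{cond:derive} for the configuration $\big(\core, \derived\cup\{[\assumed\implication C]\}, g, \primbound, \bnbtree, \stricteps\big)$, exploiting that it differs from the given valid configuration only in its set of derived constraints. Conditions \eqref{cond:consistent}, \eqref{cond:obj}, and \eqref{cond:feasC} make no reference to $\derived$ whatsoever---they only constrain $\bnbtree$, $\stricteps$, $\core$, $g$, $\primbound$, $\feasregion$, and $f$, none of which change---so they carry over unchanged from the assumed validity of $\configdef$. Thus the entire argument boils down to re-establishing \eqref{cond:derive}.

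For \eqref{cond:derive}, I would fix an arbitrary $x\in\bigcap\core$ with $g(x) < \primbound$ and apply \eqref{cond:derive} for the original configuration to get a witness $y\in\bigcap(\core\cup\derived)$ with $y\tgeq x$ and $g(y)\le g(x)$. Since $\bigcap(\core\cup\derived)\subseteq\bigcap\core$, the element $y$ lies in $\bigcap\core$, so $y\tgeq x$ is well defined and already known, and $g(y)\le g(x)$ is already known; the only new thing to verify is that this same $y$ satisfies the freshly added constraint, i.e. $y\in[\assumed\implication C]=\complem{\bigcap\assumed}\cup C$.

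This I would do by a case distinction on whether $y\in\bigcap\assumed$. If $y\notin\bigcap\assumed$, then trivially $y\in\complem{\bigcap\assumed}\subseteq[\assumed\implication C]$. If $y\in\bigcap\assumed$, then combining this with $y\in\bigcap(\core\cup\derived)$ gives $y\in\bigcap(\core\cup\derived\cup\assumed)$, and from $g(y)\le g(x) < \primbound$ we also get $y\in\{x\in\reals^n : g(x) < \primbound\}$; hypothesis~\eqref{cond:implic} then yields $y\in C\subseteq[\assumed\implication C]$. In both cases $y\in\bigcap\big(\core\cup\derived\cup\{[\assumed\implication C]\}\big)$, and together with the already established $y\tgeq x$ and $g(y)\le g(x)$ this is exactly \eqref{cond:derive} for the new configuration.

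The proof is almost entirely bookkeeping, and the only subtlety---the step that should not be glossed over---is that \eqref{cond:implic} must be invoked for the witness $y$ produced by the old \eqref{cond:derive}, rather than for the original $x$; this is legitimate precisely because the objective value is transported along the chain, $g(y)\le g(x) < \primbound$, so that $y$ genuinely lies in the improving set $\{x : g(x)<\primbound\}$ appearing on the right-hand side of \eqref{cond:implic}.
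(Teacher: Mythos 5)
Your proposal is correct and follows essentially the same route as the paper's own proof: conditions \eqref{cond:consistent}--\eqref{cond:feasC} are untouched because they do not involve $\derived$, and \eqref{cond:derive} is re-established by taking the witness $y$ from the old configuration and showing $y\in[\assumed\implication C]$ via the same case split on $y\in\bigcap\assumed$, invoking \eqref{cond:implic} for $y$ using $g(y)\le g(x)<\primbound$. The subtlety you flag---that \eqref{cond:implic} is applied to $y$ rather than $x$---is exactly the point the paper's argument also relies on.
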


\ourparagraph{Resolution rule.}
A particular form of the implicational derivation rule allows to
derive new constraints by a disjunctive argument.

\begin{corollary}
  \label{cor:resolution}
Let $\configdef$ be an $(\feasregion, f)$-valid configuration.
Consider sets $A_1$,$A_2$,$C_1$,$C_2\subseteq\reals^n$,
and let $\assumed_1,\assumed_2$
be sets of constraints with
\begin{equation}
  \label{cond:resolution}
  \left[\left(\assumed_1 \cup \{A_1\}\right) \implication C_1\right],
  \left[\left(\assumed_2 \cup \{A_2\}\right) \implication C_2\right]
  \in\core\cup\derived
  \,\land\,
  A_1 \cup A_2 \supseteq \bigcap(\core\cup\derived).
\end{equation}
Then $\big( \core, \derived \cup \{ \left[(\assumed_1 \cup \assumed_2) \implication (C_1 \cup C_2)\right] \}, g, \primbound, \bnbtree,
  \stricteps \big)$ is also $(\feasregion, f)$-valid.
\end{corollary}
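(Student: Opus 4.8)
The plan is to obtain the resolution rule as a special case of \Cref{thm:implic}, applied with the combined assumption set $\assumed \define \assumed_1 \cup \assumed_2$ and the combined constraint $C \define C_1 \cup C_2$. Since the configuration $\configdef$ is already $(\feasregion, f)$-valid, \Cref{thm:implic} tells us that adding $[(\assumed_1 \cup \assumed_2) \implication (C_1 \cup C_2)]$ to $\derived$ preserves validity as soon as we verify the inclusion~\eqref{cond:implic} for this choice, namely
\[
  C_1 \cup C_2 \;\supseteq\; \bigcap\big(\core \cup \derived \cup \assumed_1 \cup \assumed_2\big) \cap \{ x \in \reals^n : g(x) < \primbound \}.
\]

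To establish this inclusion, I would fix an arbitrary $x$ in the right-hand side and argue that $x \in C_1$ or $x \in C_2$. Note first that $x \in \bigcap(\core \cup \derived)$ and $x \in \bigcap\assumed_1 \cap \bigcap\assumed_2$. The hypothesis $A_1 \cup A_2 \supseteq \bigcap(\core \cup \derived)$ from~\eqref{cond:resolution} then gives $x \in A_1$ or $x \in A_2$; by symmetry of the two indices, assume $x \in A_1$. Consequently $x \in \bigcap(\assumed_1 \cup \{A_1\})$, so $x \notin \complem{\bigcap(\assumed_1 \cup \{A_1\})}$. On the other hand, $[(\assumed_1 \cup \{A_1\}) \implication C_1] \in \core \cup \derived$ together with $x \in \bigcap(\core \cup \derived)$ yields $x \in [(\assumed_1 \cup \{A_1\}) \implication C_1] = \complem{\bigcap(\assumed_1 \cup \{A_1\})} \cup C_1$. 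Combining the last two facts forces $x \in C_1 \subseteq C_1 \cup C_2$, which is what we wanted. Having verified~\eqref{cond:implic}, \Cref{thm:implic} immediately delivers that $\big(\core, \derived \cup \{[(\assumed_1 \cup \assumed_2) \implication (C_1 \cup C_2)]\}, g, \primbound, \bnbtree, \stricteps\big)$ is $(\feasregion, f)$-valid.

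I do not expect a genuine obstacle here: the argument is a short case split, and the branching tree~$\bnbtree$, the objective surrogate~$g$, and the bound~$\primbound$ are all untouched by the rule, so none of the consistency machinery of \Cref{def:constree} is needed. The only things to be careful about are unwinding the definition $[\assumed \implication C] = \complem{\bigcap\assumed} \cup C$ correctly and tracking which of the two disjuncts $x$ satisfies. As a minor sanity check, note that the ``$g(x) < \primbound$'' restriction in~\eqref{cond:implic} plays no role: the argument above establishes the inclusion on all of $\bigcap(\core \cup \derived \cup \assumed_1 \cup \assumed_2)$, which is strictly stronger than required.
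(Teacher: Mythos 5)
Your proof is correct and follows essentially the same route as the paper: both reduce the corollary to \Cref{thm:implic} via the case split $x\in A_1$ or $x\in A_2$ afforded by $A_1\cup A_2\supseteq\bigcap(\core\cup\derived)$, then use membership of the two given implications in $\core\cup\derived$ to land in $C_1\cup C_2$. The only (immaterial) difference is the instantiation of \Cref{thm:implic}: the paper takes $\assumed=\emptyset$ with the entire implication $[(\assumed_1\cup\assumed_2)\implication(C_1\cup C_2)]$ as the derived constraint, while you take $\assumed=\assumed_1\cup\assumed_2$ and $C=C_1\cup C_2$; both yield the same derived constraint, and your observation that the $g(x)<\primbound$ restriction is not needed matches the paper's remark that \eqref{cond:implic} holds for any value of $\primbound$.
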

A typical example for resolution is the case where
$\assumed_1=\assumed_2$ contains the branching decisions up to a parent node
in a search tree with exactly two children, and $C_1=C_2$ is a constraint
derived previously for both children.
%

\ourparagraph{Objective bound update rule.}
When an improving solution is known, this rule allows to update the value of $z$ in a
configuration:

\begin{theorem}
  \label{thm:objbound}
  Let $\configdef$ be an $(\feasregion, f)$-valid configuration and
  $x\in\bigcap\core$ with $g(x) = \primbound' < \primbound$.
  Then $\big( \core, \derived, g, \primbound', \bnbtree, \stricteps \big)$ is also
  $(\feasregion, f)$-valid.
\end{theorem}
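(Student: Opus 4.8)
The plan is to verify that the modified configuration, in which only the primal bound~$\primbound$ has been replaced by the smaller value~$\primbound' < \primbound$ while $\core$, $\derived$, $g$, $\bnbtree$, and $\stricteps$ are untouched, still satisfies the four conditions~\eqref{cond:consistent}--\eqref{cond:derive} of \Cref{def:validconf}. Three of these are essentially immediate. Condition~\eqref{cond:consistent} depends only on $\bnbtree$ and $\stricteps$, which are unchanged, so it carries over verbatim from the hypothesis that $\config$ is $(\feasregion,f)$-valid. Condition~\eqref{cond:feasC} is a statement quantified over all thresholds $\hat\primbound < \primbound$, and condition~\eqref{cond:derive} is quantified over all points of $\bigcap\core$ whose $g$-value lies below $\primbound$; since $\primbound' < \primbound$, passing to the new bound only shrinks these quantifier ranges, so both conditions for the new configuration follow from the corresponding conditions for~$\config$.

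The only point that needs a short argument is~\eqref{cond:obj}. Because $g$ maps into $\reals$, the value $\primbound' = g(x)$ is finite, so we must produce a feasible point of objective value at most~$\primbound'$. Here I would instantiate~\eqref{cond:feasC} of the original valid configuration~$\config$ with the threshold $\hat\primbound \define \primbound'$; this is an admissible choice precisely because $\primbound' < \primbound$. The given point $x \in \bigcap\core$ satisfies $g(x) = \primbound' \le \hat\primbound$, so the ``only if'' direction of the equivalence in~\eqref{cond:feasC} provides an $x' \in \feasregion$ with $f(x') \le \hat\primbound = \primbound'$, which is exactly~\eqref{cond:obj} for the new configuration.

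I do not expect any genuine obstacle. The single subtlety worth flagging is that it is the strict inequality $g(x) = \primbound' < \primbound$ in the hypothesis—rather than a mere $g(x) \le \primbound'$—that makes $\hat\primbound = \primbound'$ a legitimate value in the ``for all $\hat\primbound < \primbound$'' quantifier of~\eqref{cond:feasC}, and likewise that same strict inequality yields the range inclusions used for~\eqref{cond:feasC} and~\eqref{cond:derive}. Assembling these four verifications completes the proof, and $\qed$ follows.
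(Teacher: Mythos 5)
Your proof is correct and follows essentially the same route as the paper's: conditions \eqref{cond:consistent}, \eqref{cond:feasC}, and \eqref{cond:derive} are handled by the same invariance/weakening observations, and \eqref{cond:obj} is obtained by instantiating \eqref{cond:feasC} of the original configuration at the threshold $\primbound'$, exactly as the paper does (your version just spells out the quantifier bookkeeping more explicitly).
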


\ourparagraph{Objective function update rule.}
If the constraints imply equations, it may be desirable to substitute variables and update
the objective function accordingly:
%

\begin{theorem}
  \label{thm:objfunc}
  Let $\configdef$ be an $(\feasregion, f)$-valid configuration and
  $g'\colon \reals^n\to\reals^n$.
  If
  $
  %
  \bigcap \core \subseteq \{ x \in \reals^n : g'(x) = g(x) \},
  $
  then $\big( \core, \derived, g', \primbound, \bnbtree, \stricteps \big)$ is also
  $(\feasregion, f)$-valid.
\end{theorem}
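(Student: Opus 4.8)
The plan is to verify the four validity conditions \eqref{cond:consistent}--\eqref{cond:derive} for the updated configuration $\big(\core,\derived,g',\primbound,\bnbtree,\stricteps\big)$, using the single observation that the hypothesis $\bigcap\core\subseteq\{x:g'(x)=g(x)\}$ forces $g$ and $g'$ to coincide everywhere the configuration ever evaluates its objective function. Concretely, since $\bigcap(\core\cup\derived)\subseteq\bigcap\core$, the two functions agree not only on $\bigcap\core$ but also on the smaller set $\bigcap(\core\cup\derived)$ that appears in \eqref{cond:derive}.

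First, \eqref{cond:consistent} is inherited verbatim, as neither $\bnbtree$ nor $\stricteps$ changes; and \eqref{cond:obj} does not reference the configuration's objective function at all, so it is likewise unaffected. For \eqref{cond:feasC}, I would argue that for any threshold $\hat\primbound<\primbound$ the statement ``there exists $x\in\bigcap\core$ with $g'(x)\le\hat\primbound$'' is, by the hypothesis, literally equivalent to ``there exists $x\in\bigcap\core$ with $g(x)\le\hat\primbound$''; hence \eqref{cond:feasC} for $g'$ follows immediately from \eqref{cond:feasC} for $g$.

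The only condition requiring a (very short) argument is \eqref{cond:derive}. Take $x\in\bigcap\core$ with $g'(x)<\primbound$. Since $x\in\bigcap\core$, we have $g(x)=g'(x)<\primbound$, so \eqref{cond:derive} for the original configuration provides $y\in\bigcap(\core\cup\derived)$ with $y\tgeq x$ and $g(y)\le g(x)$. Because $\bigcap(\core\cup\derived)\subseteq\bigcap\core$, the point $y$ also lies in $\bigcap\core$, so $g'(y)=g(y)$. Chaining the (in)equalities yields $g'(y)=g(y)\le g(x)=g'(x)$, and $y\tgeq x$ is unchanged, establishing \eqref{cond:derive} for $g'$.

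I do not anticipate a genuine obstacle: the argument is entirely a matter of pushing the equality $g=g'$ through the sets on which the validity conditions quantify, and the one subtlety worth stating explicitly is the inclusion $\bigcap(\core\cup\derived)\subseteq\bigcap\core$ that lets us transfer $g'(y)=g(y)$ to the derived witness $y$, not just to the original point $x$.
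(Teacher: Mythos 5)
Your proof is correct and follows essentially the same route as the paper: push the equality $g=g'$ on $\bigcap\core$ (and hence on $\bigcap(\core\cup\derived)$) through conditions \eqref{cond:feasC} and \eqref{cond:derive}, with \eqref{cond:consistent} and \eqref{cond:obj} unaffected. If anything, your treatment of the forward direction of \eqref{cond:feasC} is slightly cleaner than the paper's, which takes an unnecessary detour through \eqref{cond:derive} to produce a witness $y\in\bigcap(\core\cup\derived)$ where the point $\hat x\in\bigcap\core$ already suffices.
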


\ourparagraph{Redundance-based strengthening rule.}
The implicational derivation rule allows the removal of suboptimal solutions only once
$\primbound < \infty$ has been established by the objective bound update rule.
The following rule allows the removal of feasible solutions
independently of the objective bound if a so-called \emph{witness}
$\omega$ is known:

\begin{theorem}
  \label{thm:redundance}
  Let $\config=\configdef$ be an $(\feasregion, f)$-valid configuration and let $C \subseteq
  \reals^n$.
  If there exists an $\omega\colon\reals^n\to\reals^n$ such that
  \begin{equation}
    \label{eq:redruleimp}
    \begin{aligned}
      x \in \bigcap (\core \cup \derived)
      \land
      x\not\in C 
      \Rightarrow
      \;\omega(x) \in \bigcap (\core \cup \derived \cup \{C\})
      \;\land&\;\\
      \omega(x) \tgeq x
      \;\land&\;
      g(\omega(x)) \leq g(x)
    \end{aligned}
  \end{equation}
  holds for all $x\in\reals^n$,
  then $\big( \core, \derived \cup \{C\}, g, \primbound, \bnbtree,
  \stricteps \big)$ is also $(\feasregion, f)$-valid.
\end{theorem}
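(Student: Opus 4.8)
The plan is to verify that the new configuration $\config' \define \big(\core, \derived \cup \{C\}, g, \primbound, \bnbtree, \stricteps\big)$ satisfies all four conditions of \Cref{def:validconf}. The key observation is that $\config'$ differs from the valid configuration $\config$ only in the set of derived constraints; the core $\core$, the surrogate objective $g$, the bound $\primbound$, the tree $\bnbtree$, and $\stricteps$ are all unchanged. Conditions \eqref{cond:consistent}, \eqref{cond:obj}, and \eqref{cond:feasC} make no reference to $\derived$, so they transfer verbatim from $\config$ to $\config'$. Hence the whole argument reduces to establishing \eqref{cond:derive} for $\config'$.

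To do so, I would fix an arbitrary $x \in \bigcap\core$ with $g(x) < \primbound$ and apply \eqref{cond:derive} for $\config$ to obtain $y \in \bigcap(\core \cup \derived)$ with $y \tgeq x$ and $g(y) \leq g(x)$. Then I split into two cases. If $y \in C$, then $y \in \bigcap(\core \cup \derived \cup \{C\})$ and $y$ already witnesses \eqref{cond:derive} for $\config'$. If instead $y \notin C$, then, since $y \in \bigcap(\core \cup \derived)$, the hypothesis \eqref{eq:redruleimp} applied to $y$ yields $\omega(y) \in \bigcap(\core \cup \derived \cup \{C\})$ with $\omega(y) \tgeq y$ and $g(\omega(y)) \leq g(y) \leq g(x)$. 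Since $\bigcap(\core \cup \derived \cup \{C\}) \subseteq \bigcap(\core \cup \derived) \subseteq \bigcap\core$, the three points $x$, $y$, $\omega(y)$ all lie in $\bigcap\core$, so transitivity of $\tgeq$ on $\bigcap\core$ (\Cref{lem:preorder}) gives $\omega(y) \tgeq x$; thus $\omega(y)$ witnesses \eqref{cond:derive} for $\config'$. In either case the required element exists, so $\config'$ is $(\feasregion, f)$-valid.

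The proof is largely bookkeeping, and I do not expect a genuine obstacle. The one place that requires care is the verification that all points occurring in the chain belong to $\bigcap\core$, since the relation $\tgeq$ is only defined there (via the deepest common node), and this is exactly what makes the transitivity step from \Cref{lem:preorder} applicable. The separate treatment of the case $y \in C$ --- where no witness step is needed --- is the other small point one should not overlook.
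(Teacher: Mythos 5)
Your proof is correct and follows essentially the same route as the paper's: conditions \eqref{cond:consistent}--\eqref{cond:feasC} are invariant, and \eqref{cond:derive} is established by first invoking \eqref{cond:derive} for $\config$ to get $x'\in\bigcap(\core\cup\derived)$, then either taking $x'$ itself (if $x'\in C$) or $\omega(x')$ (otherwise), concluding by transitivity of $\tgeq$. Your explicit check that all points in the chain lie in $\bigcap\core$ so that \Cref{lem:preorder} applies is a small but welcome addition of care over the paper's version.
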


\begin{proof}
  \eqref{cond:consistent},
  \eqref{cond:obj}, and~\eqref{cond:feasC} are invariant.
  To show that also~\eqref{cond:derive} continues to hold,
  let~$x \in \bigcap\core$ with~$g(x) < z$.
  We need to prove that there is a~$y \in \bigcap(\core\cup\derive\cup\{C\})$ with~$y
  \tgeq x$ and~$g(y) \leq g(x)$.
  By \eqref{cond:derive} for \config, there is~$x' \in \bigcap(\core\cup\derive)$
  with~$x' \tgeq x$ and~$g(x') \leq g(x)$.
  If~${x'\in C}$, we can choose $y=x'$.
  Otherwise, choose $y=\omega(x')$.
  Then \eqref{eq:redruleimp} guarantees that
  $
  y \in \bigcap (\core \cup \derived \cup \{C\})
  $,
  $y \tgeq x' \tgeq x$,
  and $g(y) \leq g(x') \leq g(x)$.
  By transitivity of $\tgeq$, this concludes the proof.
  \qed
\end{proof}

\ourparagraph{Dominance-based strengthening rule.}
\Cref{thm:redundance} 
is limited to cases where a single witness can repair
removed solutions in one step such that they satisfy all core and
derived constraints at once. The next rule allows more complex reasoning:

\begin{theorem}
  \label{thm:dominance}
  Let $\config=\configdef$ be an $(\feasregion, f)$-valid configuration and let $C \subseteq \reals^n$.
  If there exists an $\omega\colon\reals^n\to\reals^n$ such that
  \begin{equation}
    \label{eq:symruleimp}
    \begin{aligned}
      x \in \bigcap (\core \cup \derived)
      \land
      x\not\in C 
      \Rightarrow
      \;\omega(x) \in \bigcap\core
      \;\land\;
      \;\omega(x) \tgt x
      \;\land\;
      g(\omega(x)) \leq g(x)
    \end{aligned}
  \end{equation}
  holds for all $x\in\reals^n$,
  then $\big( \core, \derived \cup \{C\}, g, \primbound, \bnbtree,
  \stricteps \big)$ is also $(\feasregion, f)$-valid.
\end{theorem}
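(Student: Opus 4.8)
\medskip
\noindent\emph{Proof plan.}
The conditions \eqref{cond:consistent}, \eqref{cond:obj}, and \eqref{cond:feasC} are inherited verbatim, since they only involve the unchanged data $\bnbtree,\core,\stricteps$, resp.\ $\feasregion,f,\primbound$, resp.\ $\feasregion,f,\core,g,\primbound$, but never refer to $\derived$. So the entire work goes into re-establishing \eqref{cond:derive} with $\derived$ replaced by $\derived\cup\{C\}$: given $x\in\bigcap\core$ with $g(x)<\primbound$, I have to produce $y\in\bigcap(\core\cup\derived\cup\{C\})$ with $y\tgeq x$ and $g(y)\le g(x)$.

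Unlike in \Cref{thm:redundance}, the witness $\omega$ now only maps a removed point back into $\bigcap\core$, not into $\bigcap(\core\cup\derived\cup\{C\})$, so a single repair step is not enough; the compensating fact is that \eqref{eq:symruleimp} gives a \emph{strict} improvement $\omega(x)\tgt x$, which will force the repair process to terminate. Concretely, I would look at $S\define\{\,y\in\bigcap\core : y\tgeq x \text{ and } g(y)\le g(x)\,\}$, which contains $x$, and choose a $\tgt$-maximal element $y^\ast\in S$ (its existence is the crux and is handled in the last paragraph). Applying \eqref{cond:derive} of $\config$ to $y^\ast$ — legitimate because $g(y^\ast)\le g(x)<\primbound$ — yields $y'\in\bigcap(\core\cup\derived)$ with $y'\tgeq y^\ast$ and $g(y')\le g(y^\ast)$; transitivity of $\tgeq$ and $g(y^\ast)\le g(x)$ put $y'\in S$. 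I then claim $y'\in C$: otherwise \eqref{eq:symruleimp} applies to $y'$ and returns $\omega(y')\in\bigcap\core$ with $\omega(y')\tgt y'$ and $g(\omega(y'))\le g(y')$, and then $\omega(y')\in S$ (using $\tgt\subseteq\tgeq$, transitivity of $\tgeq$, and $g(\omega(y'))\le g(x)$), while $\omega(y')\tgt y'\tgeq y^\ast$ gives $\omega(y')\tgt y^\ast$ by the order properties of \Cref{lem:preorder}, contradicting maximality of $y^\ast$. Hence $y'\in C$, so $y'\in\bigcap(\core\cup\derived\cup\{C\})$ with $y'\tgeq x$ and $g(y')\le g(x)$, which is exactly \eqref{cond:derive}.

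The missing piece — and the main obstacle — is that $S$ has a $\tgt$-maximal element, i.e.\ that $\tgt$ admits no infinite ascending chain inside $\bigcap\core$; this is where \eqref{consistent:bounded} is essential. Suppose $x_0,x_1,\dots\in\bigcap\core$ with $x_{k+1}\tgt x_k$ for all $k$. Since $\bnbtree$ is finite and each $x_k$ lies in a unique leaf (\Cref{lemma:uniqueleaf}), some leaf $L$ contains infinitely many of them; restricting to that subsequence and using transitivity of $\tgt$ gives $z_0,z_1,\dots$ in the feasible region of $L$ with $z_{i+1}\tgt z_i$. For two points lying in the feasible region of the same leaf, $\dcn(z_i,z_{i+1})=L$, so $z_{i+1}\tgt z_i$ reads $\sigma_L(z_{i+1})\lexgteps\sigma_L(z_i)$; thus $(\sigma_L(z_i))_{i\ge 0}$ would be an infinite $\lexgteps$-ascending chain inside $\prod_{j\in[\ell_L]}(-\infty,M_j]$ with $M_j\define\sup\{\sigma_L(z)_j : z\in\bigcap\core\}<\infty$ by \eqref{consistent:bounded}. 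Such a chain cannot exist: its first coordinate is non-decreasing with every strict step of size at least $\stricteps$, hence, being bounded above, it is eventually constant, after which the same argument applies to the remaining coordinates by induction on the dimension (the base case, an empty tuple, admits no strict step). This contradiction yields $y^\ast$ and finishes the proof — the whole difficulty being to convert the local, strictly $\tgt$-increasing repair of a single removed solution into a terminating global argument, which only works thanks to the $\stricteps$-gap built into $\lexgteps$ together with the finite-upper-bound requirement \eqref{consistent:bounded}.
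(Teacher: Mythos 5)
Your proof is correct, but it is organized quite differently from the paper's. The paper establishes \eqref{cond:derive} by explicitly constructing an alternating repair sequence $x^0,x^1,x^2,\dots$ (odd steps invoke \eqref{cond:derive} of the old configuration, even steps apply $\omega$ to a point outside $C$) and then proves that this sequence reaches $\bigcap(\core\cup\derived\cup\{C\})$ after finitely many steps, via a subsequence-and-minimal-index argument over the finitely many nodes of $\bnbtree$. You instead prove up front that $\tgt$ admits no infinite ascending chain in $\bigcap\core$, extract a $\tgt$-maximal element $y^\ast$ of the set of admissible improvements of $x$, and show with a single application of \eqref{cond:derive} followed by a maximality contradiction that the resulting $y'$ must already lie in $C$. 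Both arguments rest on exactly the same three ingredients---finiteness of $\bnbtree$, the boundedness condition \eqref{consistent:bounded}, and the $\stricteps$-gap in $\lexgteps$---but your chain argument is cleaner at the key technical point: by pigeonholing the chain into a single leaf $L$ (via \Cref{lemma:uniqueleaf}) you guarantee that all comparisons use the same map $\sigma_L$ and that every deepest common node equals $L$, reducing termination to the elementary fact that a bounded, $\stricteps$-separated $\lexgteps$-chain in a fixed finite-dimensional box is finite; the paper has to work harder to pin down a single node $u$ and a single coordinate $t'$ from a chain whose comparisons a priori happen at varying nodes. The price you pay is the (routine, dependent-choice) step from ``no infinite ascending chain'' to ``a maximal element exists'', which the paper avoids by making the sequence explicit. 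One small correction: the step $\omega(y')\tgt y'\tgeq y^\ast\Rightarrow\omega(y')\tgt y^\ast$ is not covered by \Cref{lem:preorder}, which treats $\tgeq$ and $\tgt$ separately; it is exactly the mixed transitivity of \Cref{lem:doubleTrans}, the same lemma the paper invokes at the corresponding point of its own proof.
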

\begin{proof}
  Note that adding~$C$ to~$\derive$ keeps \eqref{cond:consistent}, \eqref{cond:obj},
  and~\eqref{cond:feasC} invariant.
  To prove that~\eqref{cond:derive} holds if one
  replaces~$\derive$ by~$\derive' = \derive \cup \{C\}$,
  let~$x^0 \in \bigcap\core$ with~$g(x^0) < z$.
  We need to show that there is~$y \in \bigcap(\core\cup\derive')$
  with~$y \tgeq x^0$ and~$g(y) \leq g(x^0)$.
  We establish this by creating a (potentially infinite) sequence~$(x^i)_{i = 1}^k$ with
  \[
    x^i \define
    \begin{cases}
      y^i, & \text{if } i \text{ is odd for some~$y^i$
           from~\eqref{cond:derive} for~$x=x^{i-1}$ in~$\config$},\\
      \omega(x^{i-1}), & \text{if~$i$ is even},
    \end{cases}
  \]
  where~$k = \inf\{i \in \integers_+ : x^i \in
  \bigcap(\core\cup\derive\cup\{C\})\}$.
  This sequence is well-defined, which can be shown via induction by alternatingly applying~\eqref{cond:derive} and~\eqref{eq:symruleimp}.

  It suffices to prove~$k < \infty$ as the alternating application
  of~\eqref{cond:derive} and~\eqref{eq:symruleimp}
  guarantees~$x^k \tgeq x^{k-1} \tgeq \dots \tgeq x^0$
  and~$g(x^k) \leq g(x^{k-1}) \leq \dots \leq g(x^0)$, so we can choose $y=x^k$.
  To show finiteness, first note that~\eqref{cond:derive} and~\eqref{eq:symruleimp} imply
  \[
    x^{2i}
    \tgt
    x^{2i-1}
    \tgeq
    x^{2i-2}
    \tgt
    \dots
    \tgeq
    x^2
    \tgt
    x^1
    \tgeq
    x^0
  \]
  for all~$i \in [\nicefrac{k}{2}] = \{j\in\integers_+:j \leq \nicefrac{k}{2}\}$.
  Thus, \Cref{lem:doubleTrans} in the appendix shows $x^{2i} \tgt
  x^{2i-2}$ for all~$i \in [\nicefrac{k}{2}]$,
  i.e., for each~$i \in [\nicefrac{k}{2}]$, there is a~$u_i \in \bnbnodes$
  with~$\sigma_{u_i}(x^{2i}) \lexgteps \sigma_{u_i}(x^{2i-2})$.

  For the sake of contradiction, suppose~$k = \infty$.
  Since~$V$ in $\bnbtree$ is finite, also $\{\sigma_v : v \in
  \bnbnodes\}$ is finite.
  Transitivity of~$\tgt$ and~$k = \infty$ then implies that there
  exists~$u \in \bnbnodes$ and a subsequence~$(x^{i_j})_{j = 1}^\infty$
  with~$\sigma_u(x^{i_{j+1}}) \tgt \sigma_u(x^{i_j})$
  and\linebreak $i_{j+1} > i_j\in 2\integers$ for all $j\in\integers_+$.
  %
  %
  Hence, for all~$j \in \integers_+$, there is~$t_j \in [\ell_u]$ such
  that
  \[
  \sigma_u(x^{i_{j+1}})_{t_j} \geq \sigma_u(x^{i_j})_{t_j} + \stricteps
  \,\land\,
  \sigma_u(x^{i_{j+1}})_s = \sigma_u(x^{i_j})_s
  \text{ for all } s \in [t_j-1].
  \]
  Since~$\ell_u$ is finite, there exists an index~$t$
  that infinitely often determines the lexicographic difference.
  Among all such indices~$t$, let~$t'$ be the smallest one.
  
  By \eqref{consistent:bounded}, all entries in~$\sigma_u(x)$, $x\in\bigcap\core$, are
  bounded from above.
  Hence, there must exist infinitely many~$j$
  with~$\sigma_u(x^{i_{j+1}})_{t'} < \sigma_u(x^{i_j})_{t'}$;
  otherwise, we would have $\lim_{j\to\infty} \sigma_u(x^{i_j})_{t'} = \infty$.
  For all such $j$, due to~$\sigma_u(x^{i_{j+1}}) \tgt
  \sigma_u(x^{i_j})$, there must exist $s \in [t'-1]$ with
  $\sigma_u(x^{i_{j+1}})_s \geq \sigma_u(x^{i_j})_s + \stricteps$.
  This is a contradiction to~$t'$ being minimal.
  Consequently, $k < \infty$.
  This concludes the proof.
  \qed
\end{proof}
In comparison to redundance-based strengthening, dominance-based
strengthening is both weaker in the sense that $\omega(x)$ does not
need to satisfy $\derived$ and $C$ directly, and stricter in the sense that
$\omega$ must result in an increase w.r.t. the strict order~$\tgt$.
Note that in both rules, $C$ may be an implication $[\assumed \implication
  C]$, e.g., a locally valid symmetry-breaking constraint for some node $v\in\bnbnodes$.

\ourparagraph{Epsilon shrinkage rule.}
The presence of $\stricteps$ in \eqref{cond:derive} ensures consistent progress in
the proof of \Cref{thm:dominance}.
It needs to be fixed for each dominance-based strengthening, but intermediately, $\stricteps$ can be decreased to an
arbitrarily small, positive value:

\begin{theorem}
  \label{thm:epsupdate}
  Let $\configdef$ be an $(\feasregion, f)$-valid configuration.
  If $0 < \stricteps' < \stricteps$, then $\big( \core, \derived, g,
  \primbound, \bnbtree, \stricteps' \big)$ is also $(\feasregion,
  f)$-valid.
\end{theorem}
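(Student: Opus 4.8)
The plan is to verify the four conditions \eqref{cond:consistent}--\eqref{cond:derive} of \Cref{def:validconf} for the modified configuration $\big(\core,\derived,g,\primbound,\bnbtree,\stricteps'\big)$, noting that only \eqref{cond:derive} actually involves $\stricteps$. First I would observe that $\core$-consistency of $\bnbtree$ is a purely structural property: none of the conditions \eqref{consistent:arborescence}--\eqref{consistent:packing} mention $\stricteps$, so $\core$-consistency carries over unchanged, and together with the hypothesis $\stricteps' > 0$ this gives \eqref{cond:consistent}. Conditions \eqref{cond:obj} and \eqref{cond:feasC} likewise make no reference to $\stricteps$ and are therefore invariant.

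The only substantive point is \eqref{cond:derive}, and the key observation is that the tree relation is \emph{nested} under shrinking $\stricteps$, i.e.\ $\succeq_{\stricteps,\bnbtree}\,\subseteq\,\succeq_{\stricteps',\bnbtree}$ whenever $0<\stricteps'<\stricteps$. I would first record the elementary fact that for $a,b\in\reals^k$, $a \lexgteps b$ implies $a \succ_{\stricteps'} b$: if $i$ is the smallest index with $a_i\neq b_i$, then $a_i \geq b_i + \stricteps \geq b_i + \stricteps'$, which is exactly the defining inequality for $\succ_{\stricteps'}$. Since $\dcn(x,y)$ depends only on the tree and not on $\stricteps$, it then follows directly from \Cref{def:constree} that $x \tgeq y$ implies $x \succeq_{\stricteps',\bnbtree} y$ for all $x,y\in\bigcap\core$: writing $v=\dcn(x,y)$, either $\sigma_v(x)=\sigma_v(y)$, which is unaffected, or $\sigma_v(x)\lexgteps\sigma_v(y)$, which yields $\sigma_v(x)\succ_{\stricteps'}\sigma_v(y)$.

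With this nesting in hand, \eqref{cond:derive} transfers immediately: given $x\in\bigcap\core$ with $g(x)<\primbound$, take the $y\in\bigcap(\core\cup\derived)$ guaranteed by \eqref{cond:derive} for the original configuration, so that $y\tgeq x$ and $g(y)\leq g(x)$; then $y\succeq_{\stricteps',\bnbtree} x$ by the above, while $g(y)\leq g(x)$ is unchanged, establishing \eqref{cond:derive} for the new configuration. I do not expect any genuine obstacle here; the only care required is to note explicitly that $\dcn$ and all of \eqref{consistent:arborescence}--\eqref{consistent:packing} are independent of $\stricteps$, so that nothing beyond the monotonicity of the strict lexicographic order in $\stricteps$ needs to be re-examined.
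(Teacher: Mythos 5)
Your proof is correct and follows essentially the same route as the paper's: conditions \eqref{cond:consistent}--\eqref{cond:feasC} are unaffected by the change of $\stricteps$, and \eqref{cond:derive} only becomes weaker because $\lexgteps\,\subseteq\,\succ_{\stricteps'}$ for $0<\stricteps'<\stricteps$. The paper compresses this last step into a single sentence (``\eqref{cond:derive} becomes a weaker statement''), whereas you spell out the resulting nesting $\tgeq\,\subseteq\,\succeq_{\stricteps',\bnbtree}$ and reuse the same witness $y$ explicitly; this is added detail, not a different argument.
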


\ourparagraph{Transfer rule.}
Derived constraints can be upgraded to core constraints. 
This may restrict future applications of dominance-based strengthening, but can be useful to
facilitate an objective function update or the tree exchange rule below:

\begin{theorem}
  \label{thm:transfer}
  Let $\configdef$ be an $(\feasregion, f)$-valid configuration and let
  $C\in\derived$.
  Then $\big( \core \cup \{ C\}, \derived \setminus \{ C\}, g, \primbound, \bnbtree,
  \stricteps \big)$ is also $(\feasregion, f)$-valid.
\end{theorem}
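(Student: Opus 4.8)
The plan is to verify the four validity conditions \eqref{cond:consistent}--\eqref{cond:derive} for the new configuration $\big( \core \cup \{ C\}, \derived \setminus \{ C\}, g, \primbound, \bnbtree, \stricteps \big)$, exploiting the fact that moving $C$ from $\derived$ to $\core$ does not change the set of core-and-derived constraints: $\big(\core\cup\{C\}\big)\cup\big(\derived\setminus\{C\}\big) = \core\cup\derived$, hence $\bigcap(\core\cup\{C\})\cup(\derived\setminus\{C\}) = \bigcap(\core\cup\derived)$ (using $C\in\derived$). Conditions \eqref{cond:consistent} and \eqref{cond:obj} are immediate, since the branching tree $\bnbtree$, the parameter $\stricteps$, the objective $g$, and the bound $\primbound$ are all unchanged, and $\bnbtree$ being \Cconsistent depends only on $\core$ through the set $\bigcap\core$ in \eqref{consistent:covering}, \eqref{consistent:bounded}, \eqref{consistent:packing} — wait, here $\bigcap\core$ \emph{does} shrink to $\bigcap(\core\cup\{C\})$, so I should note that these conditions are inclusion-monotone: a smaller $\bigcap\core$ only makes the ``$\subseteq$'' in \eqref{consistent:covering}, the ``$<\infty$'' in \eqref{consistent:bounded}, and the emptiness in \eqref{consistent:packing} easier to satisfy. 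So \eqref{cond:consistent} is preserved.

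For \eqref{cond:feasC}, the subtle point is that the right-hand side of the equivalence refers to $\bigcap\core$, which changes from $\bigcap\core$ to $\bigcap(\core\cup\{C\}) = \bigcap\core \cap C$. So I need to show that for all $\hat\primbound < \primbound$, there is an $x\in\bigcap\core$ with $g(x)\leq\hat\primbound$ if and only if there is an $x\in\bigcap\core\cap C$ with $g(x)\leq\hat\primbound$. The ``if'' direction is trivial. For ``only if'': suppose $x\in\bigcap\core$ with $g(x)\leq\hat\primbound < \primbound$. Then in particular $g(x) < \primbound$, so by \eqref{cond:derive} for the original configuration there is a $y\in\bigcap(\core\cup\derived)$ with $y\tgeq x$ and $g(y)\leq g(x)\leq\hat\primbound$. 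Since $C\in\derived$, we have $\bigcap(\core\cup\derived)\subseteq\bigcap\core\cap C$, so $y$ witnesses the right-hand side. This is the key step, and it is exactly the same idea used in the proof of \Cref{thm:redundance}: the derived constraints already ``trap'' every improving core solution.

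Finally, for \eqref{cond:derive}: we must show that for every $x\in\bigcap(\core\cup\{C\})$ with $g(x) < \primbound$, there is a $y\in\bigcap\big((\core\cup\{C\})\cup(\derived\setminus\{C\})\big)$ with $y\tgeq x$ and $g(y)\leq g(x)$. But $\bigcap(\core\cup\{C\})\subseteq\bigcap\core$, so such an $x$ is a fortiori in $\bigcap\core$ with $g(x) < \primbound$, and \eqref{cond:derive} for the original configuration yields $y\in\bigcap(\core\cup\derived) = \bigcap\big((\core\cup\{C\})\cup(\derived\setminus\{C\})\big)$ with $y\tgeq x$ and $g(y)\leq g(x)$, as required. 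I expect no real obstacle here; the only thing to be careful about is bookkeeping the set identity $(\core\cup\{C\})\cup(\derived\setminus\{C\}) = \core\cup\derived$ (valid because $C\in\derived$) and the monotonicity of the \Cconsistent conditions under shrinking $\bigcap\core$. I would state the proof compactly: note invariance of the tree/parameters, record the set identity, check \eqref{cond:consistent} by monotonicity, check \eqref{cond:feasC} using \eqref{cond:derive} of the original configuration for the nontrivial direction, and check \eqref{cond:derive} by the containment $\bigcap(\core\cup\{C\})\subseteq\bigcap\core$.
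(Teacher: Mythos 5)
Your proof is correct and follows essentially the same route as the paper's: the paper handles \eqref{cond:consistent} by invoking \Cref{lem:treecons} (which is exactly your inline monotonicity argument), proves the nontrivial direction of \eqref{cond:feasC} by the same chaining of the old \eqref{cond:feasC} and \eqref{cond:derive} through $\bigcap(\core\cup\derived)\subseteq\bigcap(\core\cup\{C\})$, and disposes of \eqref{cond:derive} via the identical set identity. The only blemish is that you list \eqref{consistent:packing} among the conditions affected by shrinking $\bigcap\core$, when it in fact only involves the branching constraints $\bnbdec_v$ and is unchanged --- harmless either way.
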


\ourparagraph{Deletion rule.}
%
This rule allows to remove derived and redundant core constraints:

\begin{theorem}
  \label{thm:delete}
  Let $\configdef$ be an $(\feasregion, f)$-valid configuration, let
  $\core'\subseteq\core$ and $\derived'\subseteq\derived$.
  Then $\big( \core', \derived', g, \primbound, \bnbtree, \stricteps \big)$ is also
  $(\feasregion, f)$-valid, if 
  \begin{enumerate*}[label=(\alph*)]
  \item $\core'=\core$, or
  \item \mbox{$\core' = \core \setminus \{ C \}$} for 
    $C\supseteq\bigcap\core'$,
    or
  \item \mbox{$\core' = \core \setminus \{ C \}$},\! $\sigma_v=\emptylist$ for all $v\in\bnbnodes$,\! 
    and $C\subseteq\reals^n$ is derivable from 
    $\big(\core', \emptyset, g, z, \bnbtree, \stricteps \big)$ by redundance-based
    strengthening.
  \end{enumerate*}
\end{theorem}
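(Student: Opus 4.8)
The plan is to check conditions \eqref{cond:consistent}--\eqref{cond:derive} for the configuration $\big(\core', \derived', g, \primbound, \bnbtree, \stricteps\big)$, exploiting that $\bnbtree,\stricteps,g,\primbound,\feasregion,f$ are carried over unchanged. Hence the ``$\stricteps>0$'' part of \eqref{cond:consistent} and all of \eqref{cond:obj} are immediate, and for \eqref{cond:consistent} it only remains to re-establish $\core'$-consistency of $\bnbtree$, where only \eqref{consistent:covering} and \eqref{consistent:bounded} refer to the (now possibly smaller) core. For \eqref{cond:feasC} and \eqref{cond:derive} the delicate direction is that passing from $\core$ to $\core'$ \emph{enlarges} $\bigcap\core$ to $\bigcap\core'$, so the hypotheses of these conditions now quantify over more points; on the conclusion side things only improve, since $\core'\subseteq\core$ and $\derived'\subseteq\derived$ give $\bigcap(\core\cup\derived)\subseteq\bigcap(\core'\cup\derived')$, so any witness $y$ supplied by \eqref{cond:derive} for the old configuration is still admissible.

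Cases~(a) and~(b) are quick. In~(a) nothing about $\core$ changes, so \eqref{cond:consistent} and \eqref{cond:feasC} carry over verbatim and \eqref{cond:derive} follows from the inclusion above. In~(b), $C\supseteq\bigcap\core'$ forces $\bigcap\core=\bigcap\core'\cap C=\bigcap\core'$, so $\bigcap\core$ is effectively unchanged and the argument of~(a) applies again.

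Case~(c) is the main one. Let $\omega$ be the witness certifying that $C$ is redundance-derivable from the configuration $\big(\core', \emptyset, g, \primbound, \bnbtree, \stricteps\big)$; unfolding \eqref{eq:redruleimp} for this configuration and using $\core'\cup\{C\}=\core$, the guarantee is that for every $x\in\bigcap\core'\setminus C$ one has $\omega(x)\in\bigcap\core$, $\omega(x)\tgeq x$, and $g(\omega(x))\leq g(x)$. Because $\sigma_v=\emptylist$ for all $v\in\bnbnodes$, every map $\sigma_v$ is constant, so by \Cref{def:constree} the relation $\tgeq$ is the full and $\tgt$ the empty relation on the relevant solution set; in particular the $\tgeq$-clause above is vacuous, and \eqref{consistent:bounded} holds vacuously since $[\ell_v]=\emptyset$. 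With this, \eqref{cond:feasC} for $\core'$ follows: the forward implication uses the old \eqref{cond:feasC} together with $\bigcap\core\subseteq\bigcap\core'$, and for the backward implication a point $x\in\bigcap\core'$ with $g(x)\leq\hat\primbound$ either already lies in $\bigcap\core$ or satisfies $x\notin C$, in which case $\omega(x)\in\bigcap\core$ with $g(\omega(x))\leq\hat\primbound$ reduces to the old condition. Likewise \eqref{cond:derive} for $\core'$: given $x\in\bigcap\core'$ with $g(x)<\primbound$, first replace $x$ by $\omega(x)\in\bigcap\core$ whenever $x\notin\bigcap\core$ (legitimate since then $x\notin C$, and $g$ does not increase), then apply the old \eqref{cond:derive} to obtain $y\in\bigcap(\core\cup\derived)\subseteq\bigcap(\core'\cup\derived')$ with $g(y)\leq g(x)$, while $y\tgeq x$ is automatic.

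The main obstacle is the covering condition \eqref{consistent:covering} for $\core'$, i.e.\ $\bigcap\core'\subseteq\bigcup_{v\in\children(u)}\bnbdec_v$ for all $u$ with children. This genuinely does not follow from the redundance data alone: the witness only places $\omega(x)$ inside $\bigcap\core$, hence inside the union of the branching constraints, but says nothing about $x$ itself. The way out is that ``$C$ is derivable from $\big(\core', \emptyset, g, \primbound, \bnbtree, \stricteps\big)$ by redundance-based strengthening'' presupposes that this configuration is $(\feasregion,f)$-valid---so that \Cref{thm:redundance} is applicable to it---and validity already includes $\core'$-consistency of $\bnbtree$, which is exactly \eqref{consistent:covering} (and \eqref{consistent:bounded}) for $\core'$. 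Granting this, \eqref{cond:consistent} holds for the new configuration in case~(c), and the three cases together give the claim. I expect most of the effort in the write-up to go into making this last point precise and into carefully justifying that $\sigma_v=\emptylist$ everywhere really does trivialize $\tgeq$ in the sense used above.
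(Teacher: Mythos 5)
Your proposal is correct and, for the substantive parts, follows the same route as the paper: cases (a) and (b) are handled identically (in (b) via $\bigcap\core'=\bigcap\core$), and in case (c) the backward direction of \eqref{cond:feasC} and condition \eqref{cond:derive} are established exactly as in the paper's proof, by routing any $x\in\bigcap\core'\setminus C$ through the witness $\omega$ into $\bigcap(\core'\cup\{C\})=\bigcap\core$ and then invoking validity of the old configuration; the paper first proves validity of $\big(\core',\derived,g,\primbound,\bnbtree,\stricteps\big)$ and then deletes derived constraints via case (a), which is only a cosmetic difference from your direct treatment of $\derived'$. The one genuine divergence is the justification of $\core'$-consistency of $\bnbtree$ in case (c). The paper argues that \eqref{cond:consistent} is ``trivially preserved'' because all $\sigma_v$ are empty (so \eqref{consistent:bounded} is vacuous and \eqref{consistent:packing} forces at most one child with nonempty branching constraint per node), and it never appeals to validity of the intermediate configuration $\big(\core',\emptyset,g,\primbound,\bnbtree,\stricteps\big)$: its use of redundance-based strengthening is confined to the witness condition \eqref{eq:redruleimp} instantiated with $\core=\core'$ and $\derived=\emptyset$. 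You instead obtain \eqref{consistent:covering} for the enlarged set $\bigcap\core'$ from the premise that \Cref{thm:redundance} is \emph{applicable} to that intermediate configuration, i.e., that it is itself $(\feasregion,f)$-valid. You correctly identified the covering condition as the delicate point: emptiness of the $\sigma_v$ alone does not obviously yield $\bigcap\core'\subseteq\bigcup_{v\in\children(u)}\bnbdec_v$ when some node has a single child with a nontrivial branching constraint, so your reading is the more careful one, at the price of a stronger (but arguably intended) interpretation of ``derivable by redundance-based strengthening.'' The remaining details --- the triviality of $\tgeq$ under empty $\sigma_v$, the inclusion $\bigcap(\core\cup\derived)\subseteq\bigcap(\core'\cup\derived')$, and the chain $y\tgeq\omega(x)\tgeq x$ --- all match the paper.
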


\ourparagraph{Tree exchange rule.}
The preorder $\tgeq$ based on the tree $\bnbtree$ is essential in
guaranteeing that a valid configuration cannot contain contradictory
derivations across different applications of redundance- or
dominance-based strengthening.
The following rule allows to install a new tree before any constraints
have been derived, or when all derived constraints have been deleted
or transferred to \core:

\begin{theorem}
  \label{thm:treeexchange}
  Let $\big( \core, \emptyset, g, \primbound, \bnbtree, \stricteps
  \big)$ be an $(\feasregion, f)$-valid configuration and let
  $\bnbtree'$ be a \Cconsistent branching tree.
  Then $\big( \core, \emptyset, g, \primbound, \bnbtree', \stricteps
  \big)$ is also $(\feasregion, f)$-valid.
\end{theorem}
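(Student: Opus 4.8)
The plan is to verify the four conditions \eqref{cond:consistent}--\eqref{cond:derive} of \Cref{def:validconf} for the new configuration $\big(\core, \emptyset, g, \primbound, \bnbtree', \stricteps\big)$, exploiting that only the branching tree has changed and that $\derived = \emptyset$. The key structural observation driving the whole argument is that the tree enters \Cref{def:validconf} in only two places: directly in \eqref{cond:consistent}, and indirectly through the preorder $\tgeq$ that appears in \eqref{cond:derive}.

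From this observation, three of the four conditions are immediate. Conditions \eqref{cond:obj} and \eqref{cond:feasC} refer only to $\feasregion$, $f$, $\core$, $g$, and $\primbound$, none of which changes, so they carry over verbatim from the validity of $\big(\core, \emptyset, g, \primbound, \bnbtree, \stricteps\big)$. Condition \eqref{cond:consistent} holds because $\bnbtree'$ is $\mathcal{C}$-consistent by hypothesis and $\stricteps$ is unchanged, hence still positive.

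The only condition that needs any thought is \eqref{cond:derive}, now read with $\tgeq$ replaced by $\tgeq[\bnbtree']$. Here the assumption $\derived = \emptyset$ is essential, since it yields $\bigcap(\core \cup \derived) = \bigcap\core$. Thus, given any $x \in \bigcap\core$ with $g(x) < \primbound$, I would simply choose $y = x$: then $g(y) = g(x) \leq g(x)$ holds trivially, and $y \tgeq[\bnbtree'] x$ follows from reflexivity of the preorder on $\bigcap\core$, which is exactly what \Cref{lem:preorder} provides (applied to the tree $\bnbtree'$). This settles \eqref{cond:derive} and completes the proof.

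I do not expect a genuine obstacle: the whole content of the rule is the remark that the tree influences validity only via $\tgeq$ in \eqref{cond:derive}, and that this condition becomes vacuous once $\derived = \emptyset$. This is also precisely why the hypothesis demands $\derived = \emptyset$: any constraint placed into $\derived$ via redundance- or dominance-based strengthening was justified relative to $\tgeq$ for the \emph{old} tree $\bnbtree$, and nothing guarantees that such a constraint still satisfies \eqref{cond:derive} with respect to $\tgeq[\bnbtree']$; so the rule must be applied before any constraints are derived, or after all of them have been deleted or transferred to $\core$.
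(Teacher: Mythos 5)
Your proposal is correct and follows essentially the same route as the paper's proof: \eqref{cond:consistent} holds by the hypothesis on $\bnbtree'$ and $\stricteps>0$, \eqref{cond:obj} and \eqref{cond:feasC} are independent of the tree, and \eqref{cond:derive} is settled by choosing $y=x$ via reflexivity of the preorder, which is exactly where $\derived=\emptyset$ is used. Your closing remark on why the hypothesis $\derived=\emptyset$ is necessary matches the paper's own motivation for the rule.
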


\ourparagraph{Dimension extension rule.}
Last, but not least, the dimension of the problem can be increased by introducing new
variables that (initially) do not feature in any of the constraints,
the objective function, or the preorder-defining tree:

\begin{theorem}
  \label{thm:dimext}
  Let $\configdef$ be an $(\feasregion, f)$-valid configuration and let
  $\core' = \{ C\times\reals : C\in\core \}$,
  $\derived' = \{ C\times\reals : C\in\derived \}$,
  $g'\colon\reals^{n+1}\to\reals, g'(x_1,\ldots,x_{n+1})=g(x_1,\ldots,x_n)$,
  and $\bnbtree' = (V,E,B',\sigma)$ with $B'_v = \bnbdec_v\times\reals$ for all $v\in V$.
  Then $\big( \core', \derived', g', \primbound, \bnbtree',
  \stricteps \big)$ is also $(\feasregion, f)$-valid.
\end{theorem}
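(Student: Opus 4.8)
The plan is to verify the four conditions (V1)--(V4) of \Cref{def:validconf} for the lifted configuration $\big(\core', \derived', g', \primbound, \bnbtree', \stricteps\big)$, keeping $\feasregion\subseteq\reals^n$ and $f$ fixed while the constraints, the objective, and the preorder-defining tree now live over $\reals^{n+1}$. Two elementary observations drive the whole argument. First, taking products with $\reals$ commutes with intersection: for any $\mathcal G\subseteq\powset(\reals^n)$ one has $\bigcap_{C\in\mathcal G}(C\times\reals)=\big(\bigcap\mathcal G\big)\times\reals$; hence $\bigcap\core'=(\bigcap\core)\times\reals$, $\bigcap\derived'=(\bigcap\derived)\times\reals$, and $\bigcap(\core'\cup\derived')=\bigcap(\core\cup\derived)\times\reals$. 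Second, every list $\sigma_v$ of $\bnbtree'$ contains only indices of absolute value at most $n$, so for $x=(x',x_{n+1})\in\reals^{n+1}$ the value $\sigma_v(x)$ depends only on $x'$ and equals $\sigma_v(x')$ (reading $\sigma_v$ as acting on $\reals^n$); similarly $g'(x)=g(x')$ by definition.

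First I would establish \eqref{cond:consistent}, i.e., that $\bnbtree'=(\bnbnodes,\bnbedges,B',\sigma)$ is \Cconsistent with respect to $\core'$ ($\stricteps>0$ is unchanged). Since $(\bnbnodes,\bnbedges)$ and $\sigma$ are untouched, \eqref{consistent:arborescence}, \eqref{consistent:dupl} (using $\{\pm1,\dots,\pm n\}\subseteq\{\pm1,\dots,\pm(n+1)\}$ and $\ell_v\le n\le n+1$), and \eqref{consistent:growing} carry over verbatim; \eqref{consistent:branching} holds because $B'_\bnbroot=\bnbdec_\bnbroot\times\reals=\reals^{n+1}$; \eqref{consistent:covering} follows by taking the product with $\reals$ of the inclusion in \eqref{consistent:covering} for $\bnbtree$ together with the first observation; \eqref{consistent:bounded} follows because $\{\sigma_v(x)_i : x\in\bigcap\core'\}=\{\sigma_v(x')_i : x'\in\bigcap\core\}$ by the two observations; and \eqref{consistent:packing} follows because $\sigma_u(B'_v)=\sigma_u(\bnbdec_v\times\reals)=\sigma_u(\bnbdec_v)$, so disjointness is inherited from \eqref{consistent:packing} for $\bnbtree$. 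In particular, $\solleaf$ and $\dcn$ are well-defined in $\bnbtree'$ on $\bigcap\core'$.

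Next, \eqref{cond:obj} is literally the same statement as for $\config$, since it only mentions $\feasregion$, $f$, and $\primbound$. For \eqref{cond:feasC}, fix $\hat\primbound<\primbound$: since $g'(x',x_{n+1})=g(x')$ and the last coordinate is unconstrained in $\bigcap\core'=(\bigcap\core)\times\reals$, the existence of $x\in\bigcap\core'$ with $g'(x)\le\hat\primbound$ is equivalent to the existence of $x'\in\bigcap\core$ with $g(x')\le\hat\primbound$, which by \eqref{cond:feasC} for $\config$ is equivalent to the existence of $x\in\feasregion$ with $f(x)\le\hat\primbound$.

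The one condition needing a genuine (though short) argument is \eqref{cond:derive}. Given $x=(x',x_{n+1})\in\bigcap\core'$ with $g'(x)=g(x')<\primbound$, I apply \eqref{cond:derive} for $\config$ to $x'$ to obtain $y'\in\bigcap(\core\cup\derived)$ with $y'\tgeq x'$ and $g(y')\le g(x')$, and set $y\define(y',x_{n+1})$. Then $y\in\bigcap(\core'\cup\derived')$ and $g'(y)=g(y')\le g(x')=g'(x)$ are immediate from the two observations. For $y\tgeq[\bnbtree']x$, note that $(z',z_{n+1})\in B'_v$ iff $z'\in\bnbdec_v$, so the root-to-leaf path of $x$ in $\bnbtree'$ is exactly the path of $x'$ in $\bnbtree$ (and likewise for $y$, $y'$); hence $\dcn_{\bnbtree'}(x,y)=\dcn_{\bnbtree}(x',y')=:v$. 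Evaluating $\sigma_v$ and using $\sigma_v(x)=\sigma_v(x')$ and $\sigma_v(y)=\sigma_v(y')$, the relation $y\tgeq[\bnbtree']x$ reduces exactly to $y'\tgeq x'$, which holds. This gives \eqref{cond:derive} and completes the proof. I expect this identification of the deepest common node across the two trees---that lifting each branching constraint to a cylinder does not change how a point is routed through the tree---to be the only step that is not a one-line unwinding of definitions; everything else is bookkeeping via the two observations.
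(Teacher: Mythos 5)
Your proof is correct and follows essentially the same route as the paper, which simply observes that (V1) and (V2) are invariant under the dimension extension and that (V3) and (V4) for the new configuration are equivalent to those for the old one after projecting out the added variable. Your write-up is just the fully spelled-out version of that projection argument, including the (correct) key observation that the deepest common node and the $\sigma_v$-evaluations are unaffected because the new coordinate appears in no branching constraint and no index list.
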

This allows to create extended formulations that help to derive new
constraints, typically by redundance-based strengthening, as discussed
in the next section.

\section{Certificates for mixed-integer programs}
\label{sec:realization}


Let (P) denote a mixed-integer program
$
\min \{ \sprod{c}{x} : Ax \leq b,\; x \in \integers^p \times \reals^{n-p}\},
$
where~$m,p,n \geq 0$ are integers,~$p \leq n$, $A \in
\Q^{m\times n}$, $b \in \Q^m$, and~$c \in \Q^n$.
In the following section,
we show how many popular MIP techniques can be certified within the framework of \Cref{sec:proofsystem,sec:rules}
and automatically verified.

Constraints in the abstract proof system were left to
be arbitrary subsets of~$\reals^n$.
In the MIP setting, the sets of core constraints~$\core$ and
derived constraints~$\derived$ consist of integrality restrictions on variables and linear inequalities or implications $[\mathcal A \implication C]$, where~$C$ and each $A \in \mathcal A$ is a linear inequality.
%
In the initial configuration of \Cref{thm:initvalid}, $\core=\initcore$ contains the rows of $Ax \le b$, as well as $x_j \in \integers$ for all $j \in [p]$.
The objective function is a linear function $f(x) = \sprod{c}{x}$.
In $\bnbtree$, the branching decisions $\bnbdec_v$, $v\in\bnbnodes$,
are also linear inequalities.

While a configuration does not explicitly contain the value of a dual bound, it is
natural that an LP-based branch-and-bound solver would maintain in $\derived$ a set of
implications $[\assumed \implication g(x)\geq d_\assumed]$ per subproblem $\assumed$,
derived by the implicational derivation rule using aggregation of constraints with dual multipliers, and apply the resolution rule
in order to derive globally valid dual bounds from these, much like in~\cite{VIPR}.

\subsection{Presolving, Propagation, and Branch-and-Cut}

A \emph{cutting plane} for (P) is an inequality $\sprod{\alpha}{x} \le \beta$ that is valid for the feasible region of (P), while separating some infeasible point $x^*$. Cuts that are not valid globally can be represented as an implication $[\mathcal A \implication C]$, where $\mathcal{A}$ contains the local constraints for which the cut is valid. Hence, it is clear that any cutting plane can---in the abstract proof system---be certified using the implication rule presented in \Cref{thm:implic}. However, the complexity of this in general as hard as solving (P)~\cite{GroetschelLovaszShrijver1988}. Still, for a wide variety of cutting planes, efficient verification is possible if some additional information is provided.

\ourparagraph{Aggregation.} If there exist multipliers $\lambda_i \ge 0$ such that $C \equiv \sum_{i}\lambda_i C_i$ for some linear constraints $C_i$, then $C$ is redundant, and therefore the implication is trivially satisfied. It is clear that if the indices and multipliers are specified, a verifier can efficiently check that $C\equiv \sum_{i} \lambda_i C_i$ holds. If the multipliers are not specified, a verifier can solve an LP to find the correct multipliers and indices~\cite{EiflerGleixner2023}. Therefore, we can efficiently verify the validity of cuts that are derived by aggregation.

\ourparagraph{Disjunctive cuts.} A disjunctive cut is a cut $\sprod{\alpha}{x} \le \beta$ for which there exists a disjunction $(\sprod{\pi}{x} \le \pi_0) \lor (\sprod{\pi}{x} \ge \pi_0 + 1)$ such that the inequality is valid for both $\{x : Ax \le b, \sprod{\pi}{x} \le \pi_0\}$ and $\{x : Ax \le b, \sprod{\pi}{x} \ge \pi_0 +1\}$~\cite{Jereslow1977, Balas1979, CookKananShrijver1990}. Moreover, for all variables $x_i$ that are not integer $\pi_i$ is required to be $0$. The verification for this type of cut is possible if the correct disjunction is provided in the certificate, which allows for a proof by aggregation for both sides of the disjunction. Afterwards, the validity of the complete cut can be verified by applying the resolution rule, see~\cite{EiflerGleixner2023} for details.
As shown in~\cite{CournejolsLi2001}, an efficient procedure for verifying disjunctive cuts allows to verify many families of cuts including Chv\'atal-Gomory cuts~\cite{Chvatal1973}, Gomory mixed-integer cuts~\cite{Gomory1960}, mixed-integer rounding cuts\cite{NemhauserWolsey1990}, or lift-and-project cuts~\cite{BalasCeriaCournejols1993}.
In the appendix, we also provide examples for verification of simple knapsack~\cite{Balas1975} and flowcover cuts~\cite{Padberg1985}.

\ourparagraph{Extended Formulations.} There are other types of cuts that cannot be characterized as a split cut, \eg reformulation-linearization cuts~\cite{SheraliAdams1990}. However, using the dimension extension rule in combination with conic combinations of constraints, the needed extended formulation can be produced in the certificate.

\ourparagraph{Presolving and Propagation.}
Presolving is a vital part of solving MIPs~\cite{AchterbergBixby2019,Savelsbergh1994,Fuegenschuh2005}, and often certifying a presolving step is straightforward. For example, if a variable bound is tightened by \emph{constraint propagation}~\cite{Savelsbergh1994}, the new variable bound can be verified by aggregation of existing constraints.
Concretely, given some constraint $\sprod{\alpha}{x} \le \beta$, and assume finite bounds on all variables $\ell_i \le x_i \le u_i,  i \in [n]$, an activity-based upper bound for $x_i$ (assuming $\alpha_i > 0$) can be computed as
$
x_i \le \nicefrac{(\beta - \sum_{\alpha_j \ge 0, j \neq i}\alpha_j \ell_j - \sum_{\alpha_j < 0, j \neq i}\alpha_ju_j)}{\alpha_i}
$.
An automatically verifiable proof by implicational derivation is to aggregate the original inequality $\sprod{\alpha}{x} \le \beta$ with $-\alpha_j(x_j \le u_j)$ for $\alpha_j < 0$ and with $\alpha_j(x_j \ge \ell_j)$ for $\alpha_j \ge 0$. Scaling the resulting inequality by~$1/\alpha_i$ yields a proof of the new bound. 
Similar arguments can be made for many other presolving reductions, e.g., probing~\cite{AchterbergBixby2019}. 
In the following, we mention several key presolving techniques that actively exclude feasible solutions of the original problem \optprob.

\ourparagraph{Reduced cost fixing.}
This core technique dates back to the seminal paper of Dantzig, Fulkerson, and Johnson on the traveling salesman problem~\cite{dantzig1954tsp}, and allows to tighten the bounds of a variable using information about the primal bound, the LP objective value, and the dual multipliers of the variable's bound constraints. Reduced cost fixing can be verified using implicational derivation (with $\primbound < \infty$), see \Cref{sec:appendix3}.

\ourparagraph{Dominated columns.}
Dominated columns presolving~\cite{GamrathKochEtAl2015,Andersen1995} fixes
a variable~$x_k$ to one of its bounds, if there is another variable $x_j$
with $c_j \le c_k$, $a_{*j} \le a_{*k}$, and $x_j$ is integer whenever $x_k$ is. If additionally local (\ie derived) constraints need to be considered,
we assume that they were added to $A\leq b$ for the sake of this paragraph. Such a fixing can be certified by the redundance-based strengthening rule. For the sake of simplicity, assume that~$x_j$ and~$x_k$ are bounded below by $0$ and that $x_j$ has no upper bound; in general, any lower bound and an implied upper bound is possible. Then, a witness is given by $\omega\colon\reals^n \to \reals^n$ with
\begin{equation*}
  \omega(x)_i = \begin{cases}
    x_i, &\text{if } i \notin \{j,k\}, \\
    0 , &\text{if } i = k, \\
    x_i+x_k , &\text{if } i = j.
  \end{cases}
\end{equation*}
To verify that \eqref{eq:redruleimp} of~\Cref{thm:redundance} is satisfied for $\omega$, it is straightforward to check that $\sprod{c}{\omega(x)} \le \sprod{c}{x}$. As $a_{*j} \le a_{*j}$, holds, any constraint satisfied by~$x$ is also satisfied by~$\omega(x)$. This can be automatically verified with an aggregation proof similar as for constraint propagation. Furthermore, $\omega(x)$ trivially satisfies the new constraint $x_k = 0$. In presolving the tree $\bnbtree$ will typically be the initial tree, hence~$\omega(x)\tgeq x$ is a tautology.



\subsection{Symmetry Handling}

Let~$\perm$ be a permutation of~$[n]$.
For~$x \in \reals^n$, let~$\gamma(x) =
(x_{\invperm(1)},\dots,x_{\invperm(n)})$.
A permutation~$\perm$ is a \emph{symmetry} of a MIP~(P) if~$x \in \reals^n$ is feasible for~(P) if
and only if~$\perm(x)$ is feasible, and~$\sprod{c}{x} =
\sprod{c}{\perm(x)}$.
The symmetries of a MIP form a group~$\group$, which is a subgroup of the
symmetric group on~$[n]$, denoted by~$\sym{n}$.

Although this definition of symmetries is natural,
finding all such symmetries is NP-hard~\cite{margot2010symmetry}.
One therefore usually restricts to symmetries that keep the MIP formulation
invariant.
That is, $\perm \in \sym{n}$ is a \emph{formulation symmetry} if there
is a permutation~$\rowperm \in \sym{m}$ of the constraints such
that~$\rowperm(b) = b$, $\perm(c) = c$,
$A_{\inv\rowperm(i),\invperm(j)} = A_{i,j}$ for all~$(i,j) \in [m]
\times [n]$, and the variable types are preserved.
They have the advantage that it is easy to verify if a
pair~$(\perm,\rowperm)$ defines a formulation symmetry, i.e., we can use
them for verifying symmetry reductions.

To handle (formulation) symmetries~$\group$, many techniques have been
discussed~\cite{doornmalenhojny2022cyclicsymmetries,FischettiLiberti2012,HojnyPfetsch2019,Hojny2020,kaibel2008packing,Liberti2012a,Margot2002,Margot2003,OstrowskiAnjosVannelli2015},
and we show that four popular symmetry handling techniques can be derived via our framework:
orbital branching~\cite{OstrowskiEtAl2011}, orbitopal
reduction~\cite{BendottiEtAl2021,KaibelEtAl2011}, Schreier-Sims
cuts (SST cuts)~\cite{LibertiOstrowski2014,Salvagnin2018}, and
lexicographic order based reductions~\cite{SCIP8,Friedman2007}.
As shown in~\cite{doornmalen2023unified}, these methods are
compatible with adding constraints~$\sigma_v(x) \lexgeq
\sigma_v(\perm(x))$ for all~$\perm\in\group$ at the nodes~$v$ of a
consistent branching tree~$\bnbtree$, the entries of~$\sigma_v$ being positive.
Thus, to show that the aforementioned methods fit into our framework, it is
sufficient to derive a representation of~$\sigma_v(x) \lexgeq \sigma_v(\perm(x))$ via linear
inequalities for suitable~$\sigma_v$.

There are many degrees of freedom when building~$\sigma_v$, $v \in V$.
One can select, e.g., (a) some permutation~$\pi$ of~$[n]$ and
assign~$\sigma_v = \pi$ for all $v \in V$, or (b)~if~${v \in V \setminus \{r\}}$ arises from its parent~$u$
by branching on~$x_i$, one can extend~$\sigma_u$ by~$i$ if~$i$ is
not present in~$\sigma_u$, yet.
Approach~(a) creates globally valid symmetry handling inequalities (SHIs);
approach~(b) generates local SHIs that are adapted to the branching history, allowing
to derive reductions earlier, cf.\ the discussion in~\cite{margot2010symmetry}.
As long as the tree is \Cconsistent, all choices of~$\sigma_v$ allow to
handle symmetries.
Specialized symmetry handling methods might require a special structure
of~$\sigma_v$ though, see below.

\ourparagraph{Lexicographic order based reduction.}
A classic approach to handle symmetries is to compute only solutions being
lexicographic maximal representatives among a class of symmetric solutions.
This can be enforced by deriving reductions from the relation~$\sigma_v(x)
\lexgeq \sigma_v(\perm(x))$.
For integer variables with bounded domain contained in~$[L,U]$
of size~$D = U - L$, this relation can be linearly expressed as
\begin{equation}
  \label{eq:FDgen}
  \sum_{i = 1}^{\ell_v} (D + 1)^{\ell_v - i} \cdot \sigma_v(x)_i
  \geq
  \sum_{i = 1}^{\ell_v} (D + 1)^{\ell_v - i} \cdot \sigma_v(\perm(x))_i,
\end{equation}
see, e.g., \cite{Friedman2007}.
\Cref{prop:FD} in \Cref{sec:auxresults} shows that~\eqref{eq:FDgen} can be
derived as local constraint at node~$v$ within our framework via
dominance-based strengthening by selecting~$\omega = \perm$.
Denote the corresponding derived constraint as~$C_v^\perm = [\bradec_v
\rightsquigarrow \eqref{eq:FDgen}]$, where~$\bradec_v = \{B_u : u \text{ is a node on the $r$-$v$-path in~$\bnbtree$}\}$.

Typical reductions that are derived from~$\sigma_v(x)
\lexgeq \sigma_v(\perm(x))$ are variable fixings in the binary case
or variable bound tightenings in the integer case.
Both type of reductions can be encoded as linear constraints.
If a solver finds a lexicographic order based reduction at node~$v$,
e.g., $x_i \leq u_i$ for some~$i \in [n]$ and~$u_i \in \integers$, then
this is the case because there is no~$\bar{x}$ that is feasible for the MIP
and satisfies~$\bar{x}_i \geq u_i + 1$ and~\eqref{eq:FDgen}.
Such a reduction can be derived in our framework by first
deriving~$C_v^\perm$.
In a second step, a CG-proof can be used to show that
\[
  \left\{x \in \integers^p \times \reals^{n-p} \cap \bigcap\bradec_v:
    Ax \leq b,\; x_i \geq u_i + 1,\; x \text{ satisfies~\eqref{eq:FDgen}}\right\}
  =
  \emptyset.
\]
As CG-proofs are compatible with our framework, also~$x_i \leq u_i$
can be derived.

\ourparagraph{Orbitopal reduction.}
While the previously discussed reductions are based on a single
permutation, orbitopal reduction handles symmetries of an entire
specially structured group~$\group$.
Orbitopal reduction assumes the integer variables of a MIP to be
arranged in an~$s\times t$ matrix~$X$ and~$\group$ to operate on the
variables by exchanging the columns of~$X$,
i.e., $\group$ acts on the columns like~$\sym{t}$.
As lexicographic order based reductions, orbitopal reduction derives
variable domain reductions.

In~\cite{doornmalen2023unified}, we discuss that orbitopal reduction is implied by~$\sigma_v(x)
\lexgeq \sigma_v(\perm(x))$ for all~$\perm\in\group$,
if~$\sigma_v$ can be partitioned into consecutive blocks of length~$t$ such
that each block corresponds to exactly one row of~$X$.
Variable domain reductions, e.g., $X_{i,j} \leq
u_{i,j}$ for some~$(i,j) \in [s] \times [t]$ with~$(i,j) \in \sigma_v$, can
be derived as above:
If all variables in~$X$ are bounded and integer,
we derive~\eqref{eq:FDgen} for all~$\perm\in\group$ and use a CG-proof to
show that no MIP solution satisfies~$X_{i,j} \geq u_{i,j} + 1$.
Deriving all inequalities~\eqref{eq:FDgen} is intractable though
as~$\group$ contains~$t!$ permutations.
But, in fact, it is sufficient to derive~\eqref{eq:FDgen} for the~$t-1$
permutations that swap variables of adjacent columns to derive the
reductions of orbitopal reduction, see~\cite{HojnyPfetsch2019}.

\ourparagraph{SST cuts.}
Let~$i \in [n]$ and~$\orb{\group}{i} = \{\perm(i) : \perm\in\group\}$
be its \emph{orbit}.
Simple SHIs are~$x_i \geq x_j$ for~$j \in
\orb{\group}{i}$~\cite{Liberti2012a}, but SHIs for different orbits can
be incompatible.
Compatibility can be ensured by adding these SHIs in
rounds~${i \in [n]}$ for different~$\group_i$ per round~\cite{LibertiOstrowski2014,Salvagnin2018}.
For~$\hat\sigma \in \sym{n}$, round~$i$ adds~$x_{\hat\sigma(i)} \geq x_{j}$ for all $j \in\orb{\group_i}{\hat\sigma(i)}$, and~$\group_i = \{\perm\in\group_{i-1} :
\perm(\hat\sigma(j)) = \hat\sigma(j),\; j \in [i-1] \}$, where~$\group_0 = \group$.
These SHIs (called SST cuts) can be derived from~$\hat\sigma(x) \lexgeq \hat\sigma(\perm(x))$, $\perm\in\group$~\cite{doornmalen2023unified}.


SST cuts can be defined for arbitrary variable types.
Via dominance-based strengthening a slightly
weaker family of constraints, namely~$x_i \geq x_j - \stricteps$ instead
of~$x_i \geq x_j$, can be derived.
If~$\stricteps < 1$ and the involved variables are integral, a rounding
argument can be used to also derive~$x_i \geq x_j$.
For continuous variables, $x_i \geq x_j - \stricteps$
can be gradually made stronger by the epsilon shrinkage rule.

\ourparagraph{Orbital branching.}
To handle symmetries in binary programs, this branching rule
creates two child nodes~\cite{OstrowskiEtAl2011}.
In one node, it enforces~$x_i = 1$ for some~$i \in
[n]$;
in the other node, it enforces~$x_j = 0$ for all~$j \in
\orb{\group'}{i}$ for some subgroup~$\group'$ of~$\group$.
These reductions can be derived from SST cuts~\cite{doornmalen2023unified}
if~$\sigma_v$ is adapted to the branching decisions (cf. the discussion on~$\sigma$ above).
Since SST cuts can be derived in our framework, so can the reductions
by orbital branching.

\subsection{Encoding and Verification}
\label{sec:encoding}

For pure binary programming, the feasibility of implementing and applying the seminal
proof system in~\cite{bogaerts2023certifiedsymbr}, which is the foundation of our work,
has already been demonstrated with great success~\cite{Gocht_2019_veripb,Gocht_et_al2021_isomorphism,ElffersGochtMcCreeshNordstrom_2020, Gocht_et_al2022_CNF, Gocht_et_al2020_clique, Gocht_Nordstroem_2021}.
%
For general MIP, the certificate system~\cite{VIPR} provides a blueprint for
encoding and verifying feasibility
reasoning~\cite{EiflerGleixnerPulaj2022_Asafecomputational,EiflerGleixner2023_Acomputational,EiflerGleixner2023}.
In the following, let us point out differences (and similarities) to both of these that
need to be addressed when approaching an actual implementation of a verifier for MIP.

Both types of constraints in MIP are easily encoded: linear inequalities
and integrality conditions.
The latter may be part of the original model or derived from equations (represented natively or as pairs of inequalities)
via integrality of coefficients and right-hand sides.
But unlike in the pure binary case, locally valid inequalities over unbounded
variables cannot generally be recast as globally valid inequalities.
A suitable verifier hence needs a native notion of implications
$[\mathcal{A} \implication C]$ as in~\cite{VIPR}.
As witnesses~$\omega$, affine maps $x \mapsto Qx+q$, $Q\in\Q^{n\times n}$, $q\in\Q^n$, are both
sufficient and easy to encode.
In many cases, $Q$ will be a sparse permutation matrix, for which only
off-diagonal elements need to be specified.

As described in~\cite{bogaerts2023certifiedsymbr}, a preorder $x \lexgeq y$ on binary
vectors can be elegantly encoded and verified as a system of linear inequalities in $x$
and $y$.
This is not possible in the presence of continuous and unbounded variables.
Instead, the branching tree $\bnbtree=(\bnbnodes,\bnbedges,\bnbdec,\sigma)$ must be
encoded as a generic graph structure with unique identifiers for $\bnbdec_v$ and
$\sigma_v$ attached to the nodes $v\in \bnbnodes$.
To verify $\core$-consistency, checking \eqref{consistent:arborescence},
\eqref{consistent:branching}, \eqref{consistent:dupl}, and \eqref{consistent:growing} is
elementary.
Checking \eqref{consistent:covering} and \eqref{consistent:packing} is also simple when
$\bnbdec_v$ are halfspaces given by linear inequalities.
If the boundedness condition~\eqref{consistent:bounded} does not follow trivially from
bounds on the variables, then it is the task of the certifier to provide
derivations for the boundedness and transfer them to the core before
installing the tree via \Cref{thm:treeexchange}.

The tree is then used to evaluate $\omega(x)\tgeq x$ in \eqref{eq:redruleimp}
and \eqref{eq:symruleimp}.
This can be performed by starting at the root node and diving in the branching tree as
long as $x$ and $\omega(x)$ are known to be contained in the same child.
Here, $x$ is a partial solution with fixed values or tightened bounds according to the
precondition $x \in \bigcap (\core \cup \derived) \land x\not\in C$ and elementary
propagations of these constraints.
If all variables $x$ are fixed, then $\dcn(x,\omega(x))$ and the result of $\omega(x)\tgeq
x$ can always be determined.
For partial solutions, this evaluation may fail, but all techniques discussed in
\Cref{sec:realization} allow for a witness $\omega$ such that it succeeds.
Some of these techniques may require so-called subproofs, \ie preliminary derivations to
strengthen the preconditions, which can afterwards be deleted again from $\derived$.
One example is the inductive derivation of~\eqref{eq:FDgen} 
given in \Cref{prop:FD} in \Cref{sec:auxresults}.

Finally, in its most basic form, a verifier may require the certifier to provide
detailed justification for each derivation.
In an advanced implementation, finding some of these justifications can be automated by
techniques like reverse unit propagation~\cite{GoldbergNovikov2003,ElffersGochtMcCreeshNordstrom_2020},
computing dual multipliers by a rational LP solver~\cite{EiflerGleixner2023}, or
heuristically detecting witnesses from the variable indices in the encoding of
$\sigma$.


\section{Outlook}
\label{sec:outlook}
In \Cref{sec:realization}, we have outlined how the proof system
presented in \Cref{sec:proofsystem,sec:rules} covers a wide range of important
methods implemented in state-of-the-art MIP solvers today, having to
stop short only of a discussion of lifting techniques and
infeasibility analysis, both of which typically require just
implicational reasoning.

Beyond this, we are convinced that this system can be generalized further to
make it more convenient to use and potentially (dis)cover different
algorithms:
\begin{enumerate*}[label=(\alph*)]
\item The $\sigma_v$ may contain duplicate entries, and more generally
could signify any set of dimension-reducing mappings for which a suitably
generalized extension relation~$\sgeq$ holds.
\item The lexicographic order $\lexgeq$ used to compare $\sigma_v(x)$
could become any preorder with an $\stricteps$-strict version
that allows only finitely many strict increases on the bounded image
spaces of the $\sigma_v$.
\item The need to install a fixed branching tree only known
\emph{after} solving could be eliminated by providing an update rule
that certifies a dynamic refinement of $\bnbtree$ \emph{while}
solving.
\end{enumerate*}
We know that such a rule in the style of
\Cref{thm:redundance} can be proven and works for the most common scenario of binary
variable-based branching schemes, but a more general version seems to need
inductive reasoning as in \Cref{thm:dominance} and requires further
research.


\medskip

\noindent
\textbf{Acknowledgements.}
We wish to thank Alexander Hoen, Andy Oertel, and Jakob Nordstr\"om for many patient and
insightful discussions on proof logging for binary programs.

\bibliographystyle{splncs04}
\bibliography{../bibliography}

\begin{thebibliography}{10}
\providecommand{\url}[1]{\texttt{#1}}
\providecommand{\urlprefix}{URL }
\providecommand{\doi}[1]{https://doi.org/#1}

\bibitem{Achterberg2007}
Achterberg, T.: Constraint Integer Programming. Doctoral thesis, Technische
  Universit\"at Berlin, Fakult\"at II - Mathematik und Naturwissenschaften,
  Berlin (2007). \doi{10.14279/depositonce-1634}

\bibitem{AchterbergBixby2019}
Achterberg, T., Bixby, R., Gu, Z., Rothberg, E., Weninger, D.: Presolve
  reductions in mixed integer programming. INFORMS Journal on Computing
  \textbf{32},  473--506 (2019). \doi{10.1287/ijoc.2018.0857}

\bibitem{Akgun2018MetamorphicTO}
Akgun, O., Gent, I.P., Jefferson, C., Miguel, I., Nightingale, P.: Metamorphic
  testing of constraint solvers. In: International Conference on Principles and
  Practice of Constraint Programming (2018). \doi{10.1007/978-3-319-98334-9_46}

\bibitem{AlkassarEtAl2011}
Alkassar, E., Böhme, S., Mehlhorn, K., Rizkallah, C., Schweitzer, P.: An
  introduction to certifying algorithms. it - Information Technology
  \textbf{53}(6),  287--293 (2011). \doi{10.1524/itit.2011.0655}

\bibitem{Andersen1995}
Andersen, E.D., Andersen, K.D.: Presolving in linear programming. Math.
  Program.  \textbf{71}(2),  221–245 (1995). \doi{10.1007/BF01586000}

\bibitem{Balas1975}
Balas, E.: Facets of the knapsack polytope. Mathematical Programming
  \textbf{8},  146--164 (1975). \doi{10.1007/BF01580440}

\bibitem{Balas1979}
Balas, E.: Disjunctive programming. In: Hammer, P., Johnson, E., Korte, B.
  (eds.) Discrete Optimization II, Annals of Discrete Mathematics, vol.~5, pp.
  3--51. Elsevier (1979). \doi{10.1016/S0167-5060(08)70342-X}

\bibitem{BalasCeriaCournejols1993}
Balas, E., Ceria, S., Cornu\'{e}jols, G.: A lift-and-project cutting plane
  algorithm for mixed 0-1 programs. Math. Program.  \textbf{58}(3),  295--324
  (1993)

\bibitem{BarbosaEtal2022}
Barbosa, H., Reynolds, A., Kremer, G., Lachnitt, H., Niemetz, A., N{\"o}tzli,
  A., Ozdemir, A., Preiner, M., Viswanathan, A., Viteri, S., Zohar, Y.,
  Tinelli, C., Barrett, C.: Flexible proof production in an industrial-strength
  {SMT} solver. In: Blanchette, J., Kov{\'a}cs, L., Pattinson, D. (eds.)
  Automated Reasoning. pp. 15--35. Springer, Cham (2022).
  \doi{10.1007/978-3-031-10769-6_3}

\bibitem{BendottiEtAl2021}
Bendotti, P., Fouilhoux, P., Rottner, C.: Orbitopal fixing for the full
  (sub-)orbitope and application to the unit commitment problem. Mathematical
  Programming  \textbf{186},  337--372 (2021). \doi{10.1007/s10107-019-01457-1}

\bibitem{SCIP8}
Bestuzheva, K., Besan\c{c}on, M., Chen, W.K., Chmiela, A., Donkiewicz, T., van
  Doornmalen, J., Eifler, L., Gaul, O., Gamrath, G., Gleixner, A., Gottwald,
  L., Graczyk, C., Halbig, K., Hoen, A., Hojny, C., van~der Hulst, R., Koch,
  T., L\"{u}bbecke, M., Maher, S.J., Matter, F., M\"{u}hmer, E., M\"{u}ller,
  B., Pfetsch, M.E., Rehfeldt, D., Schlein, S., Schl\"{o}sser, F., Serrano, F.,
  Shinano, Y., Sofranac, B., Turner, M., Vigerske, S., Wegscheider, F.,
  Wellner, P., Weninger, D., Witzig, J.: Enabling research through the {SCIP
  Optimization Suite} 8.0. ACM Trans. Math. Softw.  \textbf{49}(2) (2023).
  \doi{10.1145/3585516}

\bibitem{bogaerts2023certifiedsymbr}
Bogaerts, B., Gocht, S., McCreesh, C., Nordstr{\"{o}}m, J.: Certified dominance
  and symmetry breaking for combinatorial optimisation. J. Artif. Intell. Res.
  \textbf{77},  1539--1589 (2023). \doi{10.1613/JAIR.1.14296}

\bibitem{Brummayer_Lonsing_Biere2010}
Brummayer, R., Lonsing, F., Biere, A.: Automated testing and debugging of {SAT}
  and {QBF} solvers. In: Proceedings of the 13th International Conference on
  Theory and Applications of Satisfiability Testing. p. 44–57. SAT'10,
  Springer-Verlag, Berlin, Heidelberg (2010). \doi{10.1007/978-3-642-14186-7_6}

\bibitem{VIPR}
Cheung, K.K.H., Gleixner, A., Steffy, D.E.: Verifying integer programming
  results. In: Eisenbrand, F., Koenemann, J. (eds.) Integer Programming and
  Combinatorial Optimization. pp. 148--160. Springer International Publishing,
  Cham (2017)

\bibitem{Chvatal1973}
Chv\'{a}tal, V.: Edmonds polytopes and a hierarchy of combinatorial problems.
  Discrete Math.  \textbf{4}(4),  305–337 (1973).
  \doi{10.1016/0012-365X(73)90167-2}

\bibitem{ConfortiCornuejolsZambelli2014}
Conforti, M., Cornu\'ejols, G., Zambelli, G.: Integer Programming. Springer
  (2014). \doi{10.1007/978-3-319-11008-0}

\bibitem{conforti2014integer}
Conforti, M., Cornu{\'e}jols, G., Zambelli, G.: Integer programming, vol.~271.
  Springer (2014). \doi{10.1007/978-3-319-11008-0}

\bibitem{CookKananShrijver1990}
Cook, W., Kannan, R., Schrijver, A.: Chv\'atal closures for mixed integer
  programming problems. Math. Program.  \textbf{47}(2),  155–174 (1990).
  \doi{10.1007/BF01580858}

\bibitem{Cook_et_al2013}
Cook, W.J., Koch, T., Steffy, D.E., Wolter, K.: A hybrid branch-and-bound
  approach for exact rational mixed-integer programming. Math. Program. Comput.
   \textbf{5}(3),  305--344 (2013). \doi{10.1007/S12532-013-0055-6}

\bibitem{CournejolsLi2001}
Cornuéjols, G., Li, Y.: Elementary closures for integer programs. Operations
  Research Letters  \textbf{28}(1), ~1--8 (2001).
  \doi{10.1016/S0167-6377(00)00067-5}

\bibitem{Cruz-FilipeHeuleEtAl2017}
Cruz-Filipe, L., Heule, M.J.H., Hunt, W.A., Kaufmann, M., Schneider-Kamp, P.:
  Efficient certified {RAT} verification. In: de~Moura, L. (ed.) Automated
  Deduction -- CADE 26. pp. 220--236. Springer International Publishing, Cham
  (2017)

\bibitem{Cruz-FilipeMarquesEtAl2017}
Cruz-Filipe, L., Marques-Silva, J., Schneider-Kamp, P.: Efficient certified
  resolution proof checking. In: Proceedings, Part I, of the 23rd International
  Conference on Tools and Algorithms for the Construction and Analysis of
  Systems - Volume 10205. p. 118–135. Springer-Verlag, Berlin, Heidelberg
  (2017). \doi{10.1007/978-3-662-54577-5_7}

\bibitem{dantzig1954tsp}
Dantzig, G., Fulkerson, R., Johnson, S.: Solution of a large-scale
  traveling-salesman problem. Journal of the Operations Research Society of
  America  \textbf{2}(4),  393--410 (1954). \doi{10.1287/opre.2.4.393}

\bibitem{doornmalenhojny2022cyclicsymmetries}
van Doornmalen, J., Hojny, C.: Efficient propagation techniques for handling
  cyclic symmetries in binary programs (2022), (preprint at
  \url{https://arxiv.org/abs/2203.00992})

\bibitem{doornmalen2023unified}
van Doornmalen, J., Hojny, C.: A unified framework for symmetry handling
  (2023). \doi{10.48550/arXiv.2211.01295}

\bibitem{EiflerGleixner2023_Acomputational}
Eifler, L., Gleixner, A.: A computational status update for exact rational
  mixed integer programming. Mathematical Programming  \textbf{197},  793--812
  (2023). \doi{10.1007/s10107-021-01749-5}

\bibitem{EiflerGleixner2023}
Eifler, L., Gleixner, A.: Safe and verified {Gomory} mixed integer cuts in a
  rational {MIP} framework (2023), accepted for SIAM Journal on Optimization.

\bibitem{EiflerGleixnerPulaj2022_Asafecomputational}
Eifler, L., Gleixner, A., Pulaj, J.: A safe computational framework for integer
  programming applied to {Chv\'{a}tal's} conjecture. ACM Transactions on
  Mathematical Software  \textbf{48}(2),  1--12 (2022). \doi{10.1145/3485630}

\bibitem{ElffersGochtMcCreeshNordstrom_2020}
Elffers, J., Gocht, S., McCreesh, C., Nordström, J.: Justifying all
  differences using pseudo-boolean reasoning. Proceedings of the AAAI
  Conference on Artificial Intelligence  \textbf{34}(02),  1486--1494 (2020).
  \doi{10.1609/aaai.v34i02.5507}

\bibitem{FischettiLiberti2012}
Fischetti, M., Liberti, L.: Orbital shrinking. In: Mahjoub, A.R., Markakis, V.,
  Milis, I., Paschos, V.T. (eds.) Combinatorial Optimization, Lecture Notes in
  Computer Science, vol.~7422, pp. 48--58. Springer Berlin Heidelberg (2012)

\bibitem{Friedman2007}
Friedman, E.J.: Fundamental domains for integer programs with symmetries. In:
  Dress, A., Xu, Y., Zhu, B. (eds.) Combinatorial Optimization and
  Applications, Lecture Notes in Computer Science, vol.~4616, pp. 146--153.
  Springer Berlin Heidelberg (2007). \doi{10.1007/978-3-540-73556-4_17}

\bibitem{Fuegenschuh2005}
Fügenschuh, A., Martin, A.: Computational integer programming and cutting
  planes. In: Aardal, K., Nemhauser, G., Weismantel, R. (eds.) Discrete
  Optimization, Handbooks in Operations Research and Management Science,
  vol.~12, pp. 69--121. Elsevier (2005). \doi{10.1016/S0927-0507(05)12002-7}

\bibitem{GamrathKochEtAl2015}
Gamrath, G., Koch, T., Martin, A., Miltenberger, M., Weninger, D.: Progress in
  presolving for mixed integer programming. Mathematical Programming
  Computation  \textbf{7},  367--398 (2015). \doi{10.1007/s12532-015-0083-5}

\bibitem{GillardSD2019}
Gillard, X., Schaus, P., Deville, Y.: Solvercheck: Declarative testing of
  constraints. In: Schiex, T., de~Givry, S. (eds.) Principles and Practice of
  Constraint Programming - 25th International Conference, {CP} 2019, Stamford,
  CT, USA, September 30 - October 4, 2019, Proceedings. Lecture Notes in
  Computer Science, vol. 11802, pp. 565--582. Springer (2019).
  \doi{10.1007/978-3-030-30048-7\_33}

\bibitem{Gocht_2019_veripb}
Gocht, S.: {VeriPB}. 10.5281/zenodo.3548582 (2019), version 0.1.0

\bibitem{Gocht_et_al2022_CNF}
Gocht, S., Martins, R., Nordstr\"{o}m, J., Oertel, A.: {Certified CNF
  Translations for Pseudo-Boolean Solving}. In: Meel, K.S., Strichman, O.
  (eds.) 25th International Conference on Theory and Applications of
  Satisfiability Testing (SAT 2022). Leibniz International Proceedings in
  Informatics (LIPIcs), vol.~236, pp. 16:1--16:25. Schloss Dagstuhl --
  Leibniz-Zentrum f{\"u}r Informatik, Dagstuhl, Germany (2022).
  \doi{10.4230/LIPIcs.SAT.2022.16}

\bibitem{Gocht_et_al2020_clique}
Gocht, S., McBride, R., McCreesh, C., Nordstr{\"o}m, J., Prosser, P., Trimble,
  J.: Certifying solvers for clique and maximum common (connected) subgraph
  problems. In: Simonis, H. (ed.) Principles and Practice of Constraint
  Programming - 26th International Conference, CP 2020, Proceedings. pp.
  338--357. Lecture Notes in Computer Science, Springer (2020).
  \doi{10.1007/978-3-030-58475-7_20}

\bibitem{Gocht_et_al2021_isomorphism}
Gocht, S., McCreesh, C., Nordstr\"{o}m, J.: Subgraph isomorphism meets cutting
  planes: Solving with certified solutions. In: Proceedings of the Twenty-Ninth
  International Joint Conference on Artificial Intelligence. IJCAI'20 (2021)

\bibitem{Gocht_Nordstroem_2021}
Gocht, S., Nordström, J.: Certifying parity reasoning efficiently using
  pseudo-boolean proofs. Proceedings of the AAAI Conference on Artificial
  Intelligence  \textbf{35}(5),  3768--3777 (2021).
  \doi{10.1609/aaai.v35i5.16494}

\bibitem{Goldberg2003}
Goldberg, E., Novikov, Y.: Verification of proofs of unsatisfiability for {CNF}
  formulas. In: Proceedings of the Conference on Design, Automation and Test in
  Europe - Volume 1. p. 10886. DATE '03, IEEE Computer Society, USA (2003)

\bibitem{GoldbergNovikov2003}
Goldberg, E., Novikov, Y.: Verification of proofs of unsatisfiability for cnf
  formulas. In: Proceedings of the Conference on Design, Automation and Test in
  Europe - Volume 1. p. 10886. DATE '03, IEEE Computer Society, USA (2003)

\bibitem{Gomory1960}
Gomory, R.E.: An algorithm for the mixed integer problem. Tech. rep., RAND
  Corporation, Santa Monica, CA (1960)

\bibitem{GroetschelLovaszShrijver1988}
Gr{\"{o}}tschel, M., Lov{\'{a}}sz, L., Schrijver, A.: Geometric Algorithms and
  Combinatorial Optimization, Algorithms and Combinatorics, vol.~2. Springer
  (1988). \doi{10.1007/978-3-642-97881-4}

\bibitem{Hojny2020}
Hojny, C.: Packing, partitioning, and covering symresacks. Discrete Applied
  Mathematics  \textbf{283},  689--717 (2020). \doi{10.1016/j.dam.2020.03.002}

\bibitem{HojnyPfetsch2019}
Hojny, C., Pfetsch, M.E.: Polytopes associated with symmetry handling.
  Mathematical Programming  \textbf{175},  197--240 (2019).
  \doi{10.1007/s10107-018-1239-7}

\bibitem{Jereslow1977}
Jeroslow, R.: Cutting-plane theory: Disjunctive methods. In: Hammer, P.,
  Johnson, E., Korte, B., Nemhauser, G. (eds.) Studies in Integer Programming,
  Annals of Discrete Mathematics, vol.~1, pp. 293--330. Elsevier (1977).
  \doi{10.1016/S0167-5060(08)70741-6}

\bibitem{KaibelEtAl2011}
Kaibel, V., Peinhardt, M., Pfetsch, M.E.: Orbitopal fixing. Discrete
  Optimization  \textbf{8}(4),  595--610 (2011).
  \doi{http://dx.doi.org/10.1016/j.disopt.2011.07.001}

\bibitem{kaibel2008packing}
Kaibel, V., Pfetsch, M.: Packing and partitioning orbitopes. Mathematical
  Programming  \textbf{114}(1),  1--36 (2008)

\bibitem{Liberti2012a}
Liberti, L.: Reformulations in mathematical programming: automatic symmetry
  detection and exploitation. Mathematical Programming  \textbf{131}(1-2),
  273--304 (2012). \doi{10.1007/s10107-010-0351-0}

\bibitem{LibertiOstrowski2014}
Liberti, L., Ostrowski, J.: Stabilizer-based symmetry breaking constraints for
  mathematical programs. Journal of Global Optimization  \textbf{60},  183--194
  (2014)

\bibitem{Margot2002}
Margot, F.: Pruning by isomorphism in branch-and-cut. Mathematical Programming
  \textbf{94}(1),  71--90 (2002). \doi{10.1007/s10107-002-0358-2}

\bibitem{Margot2003}
Margot, F.: Exploiting orbits in symmetric {ILP}. Mathematical Programming
  \textbf{98}(1--3),  3--21 (2003). \doi{10.1007/s10107-003-0394-6}

\bibitem{margot2010symmetry}
Margot, F.: Symmetry in integer linear programming. 50 Years of Integer
  Programming 1958--2008 pp. 647--686 (2010)

\bibitem{McConnellMehlhornEtAl2011}
McConnell, R.M., Mehlhorn, K., N{\"a}her, S., Schweitzer, P.: Survey:
  Certifying algorithms. Computer Science Review  \textbf{5}(2),  119--161
  (2011), \url{https://publications.cispa.saarland/940/}

\bibitem{de2008proofs}
de~Moura, L.M., Bj{\o}rner, N.S.: Proofs and refutations, and {Z3}. In: LPAR
  Workshops. vol.~418, pp. 123--132. Doha, Qatar (2008)

\bibitem{NemhauserWolsey1990}
Nemhauser, G.L., Wolsey, L.A.: A recursive procedure to generate all cuts for
  0-1 mixed integer programs. Math. Program.  \textbf{46}(3),  379–390
  (1990). \doi{10.1007/BF01585752}

\bibitem{NemhauserWolsey1988}
Nemhauser, G., Wolsey, L.: Integer and Combinatorial Optimization. John Wiley
  \& Sons, Ltd (1988). \doi{10.1002/9781118627372.ch1}

\bibitem{OstrowskiAnjosVannelli2015}
Ostrowski, J., Anjos, M.F., Vannelli, A.: Modified orbital branching for
  structured symmetry with an application to unit commitment. Mathematical
  Programming  \textbf{150}(1),  99--129 (2015).
  \doi{10.1007/s10107-014-0812-y}

\bibitem{OstrowskiEtAl2011}
Ostrowski, J., Linderoth, J., Rossi, F., Smriglio, S.: Orbital branching.
  Mathematical Programming  \textbf{126}(1),  147--178 (2011).
  \doi{10.1007/s10107-009-0273-x}

\bibitem{Padberg1985}
Padberg, M.W., Roy, T.J.V., Wolsey, L.A.: Valid linear inequalities for fixed
  charge problems. Oper. Res.  \textbf{33},  842--861 (1985),
  \url{https://api.semanticscholar.org/CorpusID:29308357}

\bibitem{Salvagnin2018}
Salvagnin, D.: Symmetry breaking inequalities from the {Schreier-Sims} table.
  In: van Hoeve, W.J. (ed.) Integration of Constraint Programming, Artificial
  Intelligence, and Operations Research. pp. 521--529. Springer International
  Publishing, Cham (2018)

\bibitem{Savelsbergh1994}
Savelsbergh, M.: Preprocessing and probing techniques for mixed integer
  programming problems. ORSA Journal on Computing  \textbf{6} (11 1994).
  \doi{10.1287/ijoc.6.4.445}

\bibitem{SheraliAdams1990}
Sherali, H.D., Adams, W.P.: A hierarchy of relaxations between the continuous
  and convex hull representations for zero-one programming problems. SIAM
  Journal on Discrete Mathematics  \textbf{3}(3),  411--430 (1990).
  \doi{10.1137/0403036}

\bibitem{HeuleHuntWetzler2014}
Wetzler, N., Heule, M., Jr., W.A.H.: {DRAT}-trim: Efficient checking and
  trimming using expressive clausal proofs. In: Sinz, C., Egly, U. (eds.)
  Theory and Applications of Satisfiability Testing - {SAT} 2014 - 17th
  International Conference, Held as Part of the Vienna Summer of Logic, {VSL}
  2014, Vienna, Austria, July 14-17, 2014. Proceedings. Lecture Notes in
  Computer Science, vol.~8561, pp. 422--429. Springer (2014).
  \doi{10.1007/978-3-319-09284-3\_31}

\end{thebibliography}

\clearpage
\appendix

\section{Auxiliary Results and Technical Proofs}

In this appendix, we provide auxiliary results as well as the technical
proofs that we had deferred in the main part of the article.

\subsection{Auxiliary Results}
\label{sec:auxresults}

The first two results state that \Cconsistent branching trees have the
properties that one would expect from a tree generated by a
branch-and-bound algorithm.
That is, the children of a node~$u$ partition the feasible region of the
subproblem at node~$u$, and, for every feasible solution, we find exactly
one leaf of the tree in which the solution is feasible.
In the following results, we use the notation $\bradec_v \define \{B_u : u \text{ is a node on the $r$-$v$-path in~$\bnbtree$}\}$.

\begin{lemma}
  \label{lemma:uniquechild}
  Let $\bnbtree$ be a \Cconsistent branching tree and $u \in \bnbnodes$ with $\children(u)\not=\emptyset$.  Then for all $x \in \bigcap (\core \cup \bradec_{u})$ there is exactly one $v \in \children(u)$ with
  $
  x \in \bigcap (\core \cup \bradec_{v}).
  $
\end{lemma}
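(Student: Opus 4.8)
The statement is essentially a restatement of conditions \eqref{consistent:covering} and \eqref{consistent:packing} in the language of the path-accumulated branching sets $\bradec_v$. First I would establish existence: given $x \in \bigcap(\core \cup \bradec_u)$, in particular $x \in \bigcap\core$, so by \eqref{consistent:covering} applied to $u$ there is some $v \in \children(u)$ with $x \in \bnbdec_v$. Since $\bradec_v = \bradec_u \cup \{\bnbdec_v\}$ for a child $v$ of $u$ (the $r$-$v$-path extends the $r$-$u$-path by the edge $uv$), it follows that $x \in \bigcap(\core \cup \bradec_v)$. This gives at least one suitable child.

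For uniqueness, suppose $v, w \in \children(u)$ with $v \neq w$ and $x \in \bigcap(\core \cup \bradec_v) \cap \bigcap(\core \cup \bradec_w)$. Then in particular $x \in \bnbdec_v \cap \bnbdec_w$. Applying the map $\sigma_u$, we get $\sigma_u(x) \in \sigma_u(\bnbdec_v) \cap \sigma_u(\bnbdec_w)$, which contradicts \eqref{consistent:packing}. The only subtlety here is to make sure $\sigma_u(x)$ is well-defined and that applying $\sigma_u$ to a point in $\bnbdec_v$ really lands in $\sigma_u(\bnbdec_v)$ — but both are immediate from the definition $\sigma_u(C) = \{\sigma_u(x) : x \in C\}$, which makes $x \in C \Rightarrow \sigma_u(x) \in \sigma_u(C)$ a tautology, and $\sigma_u$ is a total map $\reals^n \to \reals^{\ell_u}$ so there is no domain issue.

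I expect no real obstacle; the main thing to be careful about is the bookkeeping that $\bradec_v = \bradec_u \cup \{\bnbdec_v\}$ when $v \in \children(u)$, which should be stated explicitly since it is used in both directions. One could phrase the whole argument compactly: $\bigcap(\core \cup \bradec_v) = \bigcap(\core \cup \bradec_u) \cap \bnbdec_v$ for every child $v$, so the claim reduces exactly to saying that $\{\bnbdec_v \cap \bigcap(\core\cup\bradec_u) : v \in \children(u)\}$ is a partition of $\bigcap(\core \cup \bradec_u)$ — covering by \eqref{consistent:covering}, disjointness by \eqref{consistent:packing} after applying the injectivity-free observation above. That framing also sets up the subsequent lemma on $\solleaf(x)$ cleanly by induction on the depth of the tree.
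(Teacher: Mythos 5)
Your proof is correct and follows essentially the same route as the paper's: existence from \eqref{consistent:covering} via the observation that $\bigcap(\core\cup\bradec_v)=\bigcap(\core\cup\bradec_u)\cap \bnbdec_v$ for a child $v$, and uniqueness by applying $\sigma_u$ and invoking \eqref{consistent:packing}. The only difference is presentational — you make the bookkeeping $\bradec_v=\bradec_u\cup\{\bnbdec_v\}$ explicit, which the paper leaves implicit.
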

\begin{proof}
  Let~$u \in \bnbnodes$ with~$\children(u) \neq \emptyset~$ and let~$x \in
  \bigcap(\core \cup \bradec_u)$.
  By~\eqref{consistent:covering}, there is~$v \in \children(u)$ with~$x \in
  \bigcap(\core \cup \bradec_v)$.
  In particular, for every such~$v \in \children(u)$ we have~$\sigma_u(x)
  \in \sigma_u(\bigcap(\core \cup \bradec_v))\subseteq\sigma_u(B_v)$.
  Uniqueness of~$v \in \children(u)$ thus follows
  from~\eqref{consistent:packing}.
  \qed
\end{proof}

\begin{lemma}\label{lemma:uniqueleaf}
  Let~$\bnbtree$ be a \Cconsistent branching tree.
  Then for all $x \in \bigcap \core$, there is exactly one leaf~$v \in
  \bnbnodes$ with~$x \in \bigcap(\core \cup \bradec_v)$.
\end{lemma}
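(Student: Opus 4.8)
The plan is to deduce Lemma~\ref{lemma:uniqueleaf} from Lemma~\ref{lemma:uniquechild} by an induction along the tree, exploiting that $(\bnbnodes,\bnbedges)$ is a finite arborescence with root $\bnbroot$. First I would fix $x \in \bigcap\core$ and observe that $\bradec_\bnbroot = \{\bnbdec_\bnbroot\} = \{\reals^n\}$ by~\eqref{consistent:branching}, so $x \in \bigcap(\core \cup \bradec_\bnbroot)$ holds trivially: the root is always a node whose path-constraints contain $x$.

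The existence part: starting from $\bnbroot$, I would repeatedly apply Lemma~\ref{lemma:uniquechild}. As long as the current node $u$ (with $x \in \bigcap(\core\cup\bradec_u)$) has $\children(u)\neq\emptyset$, the lemma hands us a child $v\in\children(u)$ with $x\in\bigcap(\core\cup\bradec_v)$, and we move to $v$. Since $(\bnbnodes,\bnbedges)$ is a finite arborescence, this descent must terminate, necessarily at a node $v$ with $\children(v)=\emptyset$, i.e.\ a leaf, and by construction $x\in\bigcap(\core\cup\bradec_v)$. For uniqueness, suppose $v$ and $v'$ are two leaves with $x\in\bigcap(\core\cup\bradec_v)$ and $x\in\bigcap(\core\cup\bradec_{v'})$. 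Let $w=\dcn(x,\dots)$—more precisely, consider the deepest common ancestor $w$ of $v$ and $v'$ on their respective root-paths. If $v\neq v'$, then $w$ is a proper ancestor of at least one of them, so $w$ has two distinct children $a,a'$ on the $\bnbroot$-$v$- and $\bnbroot$-$v'$-paths respectively, with $\bradec_a\subseteq\bradec_v$ and $\bradec_{a'}\subseteq\bradec_{v'}$ (path-constraint sets only grow along the tree). Hence $x\in\bigcap(\core\cup\bradec_a)\cap\bigcap(\core\cup\bradec_{a'})$ with $a,a'\in\children(w)$ distinct, and $x\in\bigcap(\core\cup\bradec_w)$ since $w$ is an ancestor of $v$. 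This contradicts the uniqueness clause of Lemma~\ref{lemma:uniquechild} applied at $w$. Therefore $v=v'$.

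I expect the only mildly delicate point to be the bookkeeping around path-constraint sets: one must note that if $a$ is a child of $w$ lying on the $\bnbroot$-$v$-path, then every node on the $\bnbroot$-$a$-path also lies on the $\bnbroot$-$v$-path, so $\bradec_a\subseteq\bradec_v$ and thus $x\in\bigcap(\core\cup\bradec_v)\subseteq\bigcap(\core\cup\bradec_a)$—and symmetrically for $a'$. With that monotonicity in hand, both the termination of the descent and the reduction of the uniqueness contradiction to a single application of Lemma~\ref{lemma:uniquechild} are immediate, so there is no real obstacle; the statement is essentially a structural consequence of the partitioning property at each internal node combined with finiteness of the tree.
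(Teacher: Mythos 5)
Your proof is correct and follows essentially the same route as the paper, which simply notes that the claim ``follows by induction from Lemma~\ref{lemma:uniquechild} and finiteness of~$V$''; your descent-to-a-leaf argument for existence and the deepest-common-ancestor contradiction for uniqueness are exactly the details that induction unpacks. The only cosmetic imprecision is the phrase ``$w$ is a proper ancestor of at least one of them'': since both $v$ and $v'$ are leaves, neither can be an ancestor of the other, so $w$ is automatically a proper ancestor of \emph{both}, which is what your two-distinct-children step actually uses.
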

\begin{proof}
  Because of $\bnbdec_\bnbroot=\reals^n$, $x \in \bigcap (\core \cup \bradec_\bnbroot)$.
  The result follows by induction from \Cref{lemma:uniquechild} and finiteness
  of~$V$ in~$\bnbtree$, see~\eqref{consistent:arborescence}.
  \qed
\end{proof}

A crucial component of an~$(F,f)$-valid configuration as defined in
\Cref{def:validconf} is the relation induced by a branching tree~\bnbtree.
The next results state that this relation and the corresponding strict
relation define a preorder and strict order, respectively.
Afterwards, we also show that a transitivity property between these two
relations exists.

\begin{lemma}
\label{lem:preorder}
  Let~$\bnbtree$ be a \Cconsistent branching tree.
  Then $\tgeq$ defines a preorder and $\tgt$ defines a strict order on $\bigcap\core$.
\end{lemma}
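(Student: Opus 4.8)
## Proof Plan for Lemma~\ref{lem:preorder}

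The plan is to verify directly that $\tgeq$ satisfies reflexivity and transitivity on $\bigcap\core$, and that $\tgt$ is irreflexive and transitive, using the structure of the deepest common node $\dcn(x,y)$ together with the properties of lexicographic orders. For reflexivity of $\tgeq$, note that $\dcn(x,x)$ is well-defined (it is $\solleaf(x)$, which exists and is unique by \Cref{lemma:uniqueleaf}), and $\sigma_{\dcn(x,x)}(x) = \sigma_{\dcn(x,x)}(x)$, so the first disjunct in the definition of $\tgeq$ holds. Irreflexivity of $\tgt$ follows because $x \lexgteps x$ is impossible: the definition of $\lexgteps$ requires $x \neq y$.

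The transitivity arguments are the substance. Given $x \tgeq y$ and $y \tgeq z$ with all three in $\bigcap\core$, I would first understand how the three pairwise deepest common nodes $\dcn(x,y)$, $\dcn(y,z)$, $\dcn(x,z)$ relate. The key structural fact is that all of $\dcn(x,y)$, $\dcn(y,z)$, $\dcn(x,z)$ lie on the root-to-$\solleaf$ paths and are totally ordered by the ancestor relation, since any two of $x,y,z$ that travel together down the tree do so along a common prefix of the branching path (this uses \Cref{lemma:uniquechild}: at each node a solution descends into a unique child). Concretely, $\dcn(x,z)$ is an ancestor of (or equal to) at least one of $\dcn(x,y)$ and $\dcn(y,z)$; more precisely, writing $a = \dcn(x,y)$, $b = \dcn(y,z)$, one of $a,b$ is an ancestor of the other, say $a$ is an ancestor of $b$, and then $\dcn(x,z) = a$. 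The reason is that $x$ and $z$ follow the same branching path at least as far as the shallower of $a,b$ (since $x$ agrees with $y$ down to $a$ and $y$ agrees with $z$ down to $b$), but at node $a$ the solution $x$ and the solution $y$ split, and since $z$ still agrees with $y$ at $a$ (as $a$ is an ancestor of $b$), $x$ and $z$ also split at $a$. Using \eqref{consistent:growing}, $\sigma_a \sleq \sigma_b$, so $\sigma_a(w)$ is a prefix of $\sigma_b(w)$ for every $w$; combined with $\sigma_a(x)=\sigma_a(y)$ being \emph{false} (they split at $a$) and $\sigma_b(y)=\sigma_b(z)$, one deduces the comparisons at the common node $\dcn(x,z)=a$ from those at $a$ and $b$ via prefix-monotonicity of $\lexgeq$ and $\lexgteps$. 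I would then do the short case analysis on which disjuncts hold in $x\tgeq y$ and $y\tgeq z$, using that if $\sigma_a(x)\lexgteps\sigma_a(y)$ and $\sigma_a(y)=\sigma_a(z)$ (restricting $\sigma_b(y)=\sigma_b(z)$ to the prefix indexed by $\sigma_a$) then $\sigma_a(x)\lexgteps\sigma_a(z)$, and symmetrically, so in every case $x\tgeq z$; the argument for $\tgt$ is identical except one never lands in the equality disjunct. Transitivity of $\tgt$ then also gives $x\tgt z$ when $x\tgt y$ and $y\tgt z$.

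The main obstacle I expect is making the claim ``$\dcn(x,z)$ equals the shallower of $\dcn(x,y)$ and $\dcn(y,z)$'' fully rigorous, i.e.\ carefully handling the case distinction on the ancestor relation among the three deepest common nodes and confirming that the split of $x$ from $y$ at node $a$ transfers to a split of $x$ from $z$. This is where \eqref{consistent:packing} (children have disjoint $\sigma_u$-images, so agreeing at a node means descending into the same child) and \eqref{consistent:growing} (the $\sigma$-lists grow monotonically along edges, giving the prefix relation) do the real work. Once that structural lemma about $\dcn$ is in place, the order-theoretic bookkeeping is a routine finite case check on the two defining disjuncts, and I would keep it brief. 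I would also remark that one should restrict $x,y,z$ to $\bigcap\core$ throughout, as the definition of $\dcn$ — hence of $\tgeq$ and $\tgt$ — is only given there.
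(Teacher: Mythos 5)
Your overall strategy coincides with the paper's: reflexivity of $\tgeq$ and irreflexivity of $\tgt$ are immediate from the definitions, and transitivity is reduced to a case analysis on how the three pairwise deepest common nodes sit on a common rooted path, with \eqref{consistent:growing} supplying prefix-monotonicity of the comparisons and \eqref{consistent:packing} forcing strictness whenever two solutions separate at a non-leaf node. The one genuine problem is your structural claim that, writing $a=\dcn(x,y)$ and $b=\dcn(y,z)$ with $a$ an ancestor of $b$, one always has $\dcn(x,z)=a$. This fails precisely when $a=b$: then $\dcn(x,z)$ may be a \emph{proper descendant} of $a$ (for instance, $x$ and $z$ enter the same child of $a$ while $y$ enters a different one), and this is exactly the configuration that carries most of the weight in the paper's proof (its ``second case''). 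In that situation the comparison defining $x\tgeq z$ must be made at the deeper node $v=\dcn(x,z)$, not at $a$, so your sketched case check --- which compares $\sigma_a(x)$ and $\sigma_a(z)$ and stops --- does not conclude: if the equality disjunct held at $a$, nothing about the deeper entries of $\sigma_v$ would follow. The repair is to note that $a$ is then not a leaf, so by \eqref{consistent:packing} both $\sigma_a(x)=\sigma_a(y)$ and $\sigma_a(y)=\sigma_a(z)$ are excluded; both hypotheses are therefore strict $\lexgteps$-comparisons at $a$, transitivity of $\lexgteps$ gives $\sigma_a(x)\lexgteps\sigma_a(z)$, and only because the first differing index already lies in the prefix $\sigma_a$ does \eqref{consistent:growing} let this descend to $\sigma_v(x)\lexgteps\sigma_v(z)$. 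You have all the required tools and even flag the $\dcn$ case distinction as the delicate point, but the structural lemma as you state it excludes this case rather than handling it, so the plan as written would fail there; the remaining cases, and the $\tgt$ variant, match the paper's argument.
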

\begin{proof}
  Recall that~$\tgeq$ defines a preorder, if it is reflexive and transitive.
  Reflexivity holds as~$\sigma_{\dcn(x,x)}(x) = \sigma_{\dcn(x,x)}(x)$.
  For transitivity, let~$x,y,z \in \bigcap\core$ with~$y \tgeq x$ and~$z
  \tgeq y$.
  If~$v = \dcn(x,y) = \dcn(x,z) = \dcn(y,z)$,
  transitivity follows by transitivity of~$=$ and~$\lexgteps$.
  In the following, we thus may assume that not all deepest common nodes
  are the same.
  Hence w.l.o.g. we can select pairwise distinct~$i,j,k \in \{x,y,z\}$
  such that~$\dcn(i,k) = \dcn(j,k)$ and~$\dcn(i,j)$ is a proper successor
  of~$\dcn(i,k)$.
  We call this proper successor the \emph{deepest node} and distinguish
  whether~$\dcn(x,y)$, $\dcn(x,z)$, or~$\dcn(y,z)$ is the deepest node:
  \newcommand{\threetree}[3]{%
    \raisebox{-4mm}{%
      \begin{tikzpicture}[anchor=north,scale=2,x=1mm, y=-1.5mm, font=\small]
      \draw[-] (0, 0)--(2, 2);
      \draw[-] (0, 0)--(-2, 2);
      \draw[-] (-1, 1)--(0, 2);
      \draw (-2, 2) node {$#1$};
      \draw (0, 2) node {$#2$};
      \draw (2, 2) node {$#3$};
    \end{tikzpicture}}
  }
  \[
  \threetree{i}{j}{k}
  \;\Leftrightarrow\;
  \threetree{x}{y}{z}
  \;\lor\;
  \threetree{x}{z}{y}
  \;\lor\;
  \threetree{y}{z}{x}
  \]

  First, let~$v = \dcn(x,y)$ be the deepest node and~$u = \dcn(x,z)
  = \dcn(y,z)$.
  Then, $y \tgeq x$ implies~$\sigma_v(y) = \sigma_v(x) \lor \sigma_v(y)
  \lexgteps \sigma_v(x)$, which in turn yields that~$\sigma_u(y) = \sigma_u(x)\lor\sigma_u(y)
  \lexgteps \sigma_u(x)$ by~\eqref{consistent:growing} because~$u$ is an
  ancestor of~$v$.
  Together with~$z \tgeq y \Leftrightarrow \sigma_u(z) = \sigma_u(y)\lor\sigma_u(z)
  \lexgteps \sigma_u(y)$, this shows~$z \tgeq x$.

  Second, let~$v = \dcn(x,z)$ be the deepest node and~$u = \dcn(y,z)
  = \dcn(x,y)$.
  Then, $y \tgeq x$ implies~$\sigma_u(y) = \sigma_u(x) \lor \sigma_u(y)
  \lexgteps \sigma_u(x)$, and
  $z \tgeq y$ yields that~$\sigma_u(z) = \sigma_u(y) \lor \sigma_u(z)
  \lexgteps \sigma_u(y)$.
  In fact, we claim that for both relations~$\sigma_u(y) = \sigma_u(x)$
  and~$\sigma_u(z) = \sigma_u(y)$ cannot hold.
  If we are able to show this, $\sigma_u(y) \lexgteps \sigma_u(x)$ and
  $\sigma_u(z) \lexgteps \sigma_u(y)$ imply~$\sigma_v(y) \lexgteps
  \sigma_v(x)$ and~$\sigma_v(z) \lexgteps \sigma_v(y)$, respectively,
  by~\eqref{consistent:growing}.
  Relation~$z \tgeq x$ then follows from transitivity of~$\lexgteps$. 
  
  To prove that~$\sigma_u(y) = \sigma_u(x)$ cannot hold, observe that~$u$
  is not a leaf as~$v$ is a proper successor of~$u$.
  Since~$u = \dcn(x,y)$, solutions~$x$ and~$y$ must be feasible at
  different children of~$u$.
  By~\eqref{consistent:packing}, we conclude~$\sigma_u(y) \neq
  \sigma_u(x)$, and analogously, $\sigma_u(z) \neq \sigma_u(y)$.

  Third, let~$v = \dcn(y,z)$ be the deepest node and~$u = \dcn(x,y)
  = \dcn(x,z)$.
  Then, $z \tgeq y$ implies~$\sigma_v(z) = \sigma_v(y) \lor \sigma_v(z)
  \lexgteps \sigma_v(y)$.
  As in the first case, this yields~$\sigma_u(z) = \sigma_u(y) \lor
  \sigma_u(z) \lexgteps \sigma_u(y)$ by~\eqref{consistent:growing} because~$u$ is an
  ancestor of~$v$.
  Together with~$y \tgeq x$, this shows~$z \tgeq x$, concluding the proof
  that~$\tgeq$ is a preorder on~$\bigcap\core$.
  \smallskip

  To prove that~$\tgt$ is a strict order on~$\bigcap\core$, we need to show
  that it is irreflexive, antisymmetric, and transitive.
  Irreflexivity and antisymmetry follow from irreflexivity and antisymmetry
  of~$\lexgteps$.
  For transitivity, let~$x,y,z \in \bigcap\core$ with~$y \tgt x$ and~$z
  \tgt y$.
  We pursue a similar strategy as above.
  If we have~${v = \dcn(x,y) = \dcn(x,z) = \dcn(y,z)}$, then~$z \tgt x$ follows from
  transitivity of~$\lexgteps$ and that~$\sigma_v$ is used in all comparisons.
  Otherwise, we again distinguish the three cases above.

  First, let~$v = \dcn(x,y)$ be the deepest node and~$u = \dcn(x,z)
  = \dcn(y,z)$.
  Then, $y \tgt x$ yields~$\sigma_v(y)
  \lexgteps \sigma_v(x)$.
  Thus, $\sigma_u(y) = \sigma_u(x) \lor \sigma_u(y) \lexgteps \sigma_u(x)$
  by~\eqref{consistent:growing}.
  From~$z \tgt y$, we get~$\sigma_u(z) \tgt \sigma_u(y)$.
  Combining this shows~${z \tgt x}$.

  Second, let~$v = \dcn(x,z)$ be the deepest node and~$u = \dcn(y,z)
  = \dcn(x,y)$.
  As above, $y \tgt x$ implies~$\sigma_u(y) \lexgteps \sigma_u(x)$,
  and $z \tgt y$ yields~$\sigma_u(z) \lexgteps \sigma_u(y)$.
  Transitivity of~$\lexgteps$ then implies~$\sigma_u(z) \lexgteps \sigma_u(x)$.
  Since~$u$ is an ancestor of~$v$, \eqref{consistent:growing}
  implies~$\sigma_v(z) \lexgteps \sigma_v(x)$, i.e, $z \tgt x$ holds.

  Third, let~$v = \dcn(y,z)$ be the deepest node and~$u = \dcn(x,y)
  = \dcn(x,z)$.
  Then, $z \tgt y$ implies~$\sigma_v(z) \lexgteps \sigma_v(y)$, which again
  implies~$\sigma_u(z) = \sigma_u(y) \lor \sigma_u(z) \lexgteps
  \sigma_u(y)$.
  From~$y \tgt x$, we derive~$\sigma_u(y) \lexgteps \sigma_u(x)$.
  Transitivity
  of~$\lexgteps$ yields~$z \tgt x$.
  Relation~$\tgt$ therefore defines a strict order.
  \qed
\end{proof}

\begin{lemma}\label{lem:doubleTrans}
  Let~$\bnbtree$ be \Cconsistent and let~$x,y,z \in \bigcap\core$
  be such that~$z \tgt y$ and~$y \tgeq x$ hold.
  Then, $z \tgt x$.
\end{lemma}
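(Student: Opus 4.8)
The plan is to replay the case analysis from the proof of \Cref{lem:preorder}, now carrying one strict and one non-strict relation. First I would isolate two routine monotonicity facts, already used implicitly there: if $u$ is an ancestor of $w$ in $\bnbtree$, so that $\sigma_u \sleq \sigma_w$, then for $a,b \in \bigcap\core$ one has $\sigma_w(a)=\sigma_w(b) \Rightarrow \sigma_u(a)=\sigma_u(b)$ and $\sigma_w(a)\lexgteps\sigma_w(b) \Rightarrow \big(\sigma_u(a)=\sigma_u(b) \lor \sigma_u(a)\lexgteps\sigma_u(b)\big)$ (restriction to a prefix), while conversely $\sigma_u(a)\lexgteps\sigma_u(b) \Rightarrow \sigma_w(a)\lexgteps\sigma_w(b)$ (passage to an extension). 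Each is immediate from the definition of $\lexgteps$ once one recalls that $\sigma_u(\cdot)$ is the restriction of $\sigma_w(\cdot)$ to its first $\ell_u$ coordinates, and all three are applications of \eqref{consistent:growing}.

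Next, by \Cref{lemma:uniqueleaf} the solutions $x,y,z$ lie in unique leaves, so the three deepest common nodes $\dcn(x,y),\dcn(x,z),\dcn(y,z)$ are well defined, and—being nodes on root-to-leaf paths of an arborescence—either all three coincide, or two coincide with some node $u$ while the third is a proper successor $v$ of $u$: exactly the trichotomy depicted in the proof of \Cref{lem:preorder}. If all three equal a node $v$, then $z\tgt y$ reads $\sigma_v(z)\lexgteps\sigma_v(y)$ and $y\tgeq x$ reads $\sigma_v(y)=\sigma_v(x) \lor \sigma_v(y)\lexgteps\sigma_v(x)$, so transitivity of $\lexgteps$ together with its compatibility with $=$ yields $\sigma_v(z)\lexgteps\sigma_v(x)$, i.e.\ $z\tgt x$.

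Otherwise I would split according to which of $\{x,y\},\{x,z\},\{y,z\}$ is the pair whose deepest common node is the proper successor $v$, with $u$ the common value of the other two. When $\dcn(x,y)=v$: evaluate $z\tgt y$ at $u=\dcn(y,z)$ (a strict $\lexgteps$ at $u$) and push $y\tgeq x$ down from $v$ to $u$; combining gives a strict $\sigma_u(z)\lexgteps\sigma_u(x)$, which is $z\tgt x$ since $\dcn(x,z)=u$. When $\dcn(x,z)=v$: both hypotheses are naturally evaluated at $u$, again giving a strict $\sigma_u(z)\lexgteps\sigma_u(x)$, and this lifts along $\sigma_u\sleq\sigma_v$ to $\sigma_v(z)\lexgteps\sigma_v(x)$, i.e.\ $z\tgt x$ since $\dcn(x,z)=v$. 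When $\dcn(y,z)=v$: pushing $z\tgt y$ down from $v$ to $u$ and combining with $y\tgeq x$ at $u$ only yields $\sigma_u(z)=\sigma_u(x) \lor \sigma_u(z)\lexgteps\sigma_u(x)$, so the equality alternative must be excluded.

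That exclusion is the one genuinely load-bearing step, and it is where \eqref{consistent:packing} enters: since $v$ is a proper successor of $u$, both $y$ and $z$ lie in $\bigcap(\core\cup\bradec_{c'})$ for the child $c'$ of $u$ on the $u$-$v$ path, whereas $x$—because $\dcn(x,y)=u$—lies in $\bigcap(\core\cup\bradec_{c})$ for a different child $c$ of $u$ (by \Cref{lemma:uniquechild}, $c=c'$ would force $\dcn(x,y)$ below $u$). Then $\sigma_u(x)\in\sigma_u(\bnbdec_c)$ and $\sigma_u(y)\in\sigma_u(\bnbdec_{c'})$, which are disjoint by \eqref{consistent:packing}, so $\sigma_u(x)\neq\sigma_u(y)$. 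This rules out $\sigma_u(z)=\sigma_u(y)=\sigma_u(x)$ and forces the strict alternative $\sigma_u(z)\lexgteps\sigma_u(x)$, i.e.\ $z\tgt x$ since $\dcn(x,z)=u$. Everything outside this argument is bookkeeping about which node each relation is naturally evaluated at, handled via the prefix/extension facts above.
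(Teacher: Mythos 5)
Your proof is correct and follows essentially the same route as the paper's: the same trichotomy on which of the three deepest common nodes is the proper successor, the same prefix/extension monotonicity via \eqref{consistent:growing}, and the same use of \eqref{consistent:packing} to exclude $\sigma_u(y)=\sigma_u(x)$ in the critical case. The only (harmless) deviation is in the case $\dcn(x,z)=v$, where you observe that strictness at $u$ already follows from combining $\lexgteps$ with equality, whereas the paper invokes the packing argument there as well.
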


\begin{proof}
  If~$v = \dcn(x,y) = \dcn(x,z) = \dcn(y,z)$,
  then the chain $z \tgt y \tgeq x$ implies~$\sigma_v(z) \lexgteps \sigma_v(y)  =
  \sigma_v(x) \lor \sigma_v(z) \lexgteps \sigma_v(y)  \lexgteps
  \sigma_v(x)$.
  In the former case, $z \tgt x$ follows immediately, and in the latter
  case it follows from transitivity of~$\lexgteps$.
  In the remainder of the proof, we follow the same strategy and
  terminology as in the proof of \Cref{lem:preorder}.

  First, let~$v = \dcn(x,y)$ be the deepest node and~$u = \dcn(x,z)
  = \dcn(y,z)$.
  Then, $z \tgt y$ implies~$\sigma_u(z) \lexgteps \sigma_u(y)$, and~$y
  \tgeq x$ yields~$\sigma_v(y) = \sigma_v(x) \lor \sigma_v(y) \lexgteps
  \sigma_v(x)$.
  Condition~\eqref{consistent:growing} then implies~$\sigma_u(y) =
  \sigma_u(x) \lor \sigma_u(y) \lexgteps \sigma_u(x)$.
  Combining this with~$\sigma_u(z) \lexgteps \sigma_u(y)$ shows~$z \tgt x$.

  Second, let~$v = \dcn(x,z)$ be the deepest node and~$u = \dcn(y,z)
  = \dcn(x,y)$.
  From~$z \tgt y$, we derive~$\sigma_u(z) \lexgteps \sigma_u(y)$, which
  implies~$\sigma_v(z) \lexgteps \sigma_v(y)$
  by~\eqref{consistent:growing}.
  Moreover, analogously to the proof of \cref{lem:preorder}, we can
  show that~$y \tgeq x$ implies~$\sigma_u(y) \lexgteps \sigma_u(x)$, i.e.,
  $\sigma_u(y) = \sigma_u(x)$ cannot hold.
  Again by~\eqref{consistent:growing}, this implies~$\sigma_v(y) \lexgteps
  \sigma_v(x)$.
  Transitivity of~$\lexgteps$ then shows that~$\sigma_v(z) \lexgteps
  \sigma_v(x)$, and thus, $z \tgt x$ holds.

  Third, let~$v = \dcn(y,z)$ be the deepest node and~$u = \dcn(x,y)
  = \dcn(x,z)$.
  From~$y \tgeq x$, we conclude~$\sigma_u(y) = \sigma_u(x) \lor \sigma_u(y)
  \lexgteps \sigma_u(x)$.
  As above, $\sigma_u(y) = \sigma_u(x)$ cannot hold due to~\eqref{consistent:packing},
  thus $\sigma_u(y) \lexgteps \sigma_u(x)$ follows.
  By~\eqref{consistent:growing}, we obtain $\sigma_v(y) \lexgteps
  \sigma_v(x)$.
  Combining this with~$z \tgt y\Leftrightarrow\sigma_v(z) \lexgteps \sigma_v(y)$, we conclude that~$z \tgt x$ holds.
  \qed
\end{proof}

The next lemma shows that $\core$-consistency of a branching tree is preserved if
the set~$\core$ of core constraints is extended.

\begin{lemma}
  \label{lem:treecons}
  Let $\bnbtree$ be a \Cconsistent branching tree and $C \subseteq \reals^n$.
  Then $\bnbtree$ is also $\core\cup\{C\}$-consistent.
\end{lemma}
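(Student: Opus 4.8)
The plan is to go through the seven conditions \eqref{consistent:arborescence}--\eqref{consistent:packing} of the definition of a \Cconsistent branching tree and check that each is preserved when $\core$ is replaced by $\core\cup\{C\}$. The key observation is that only two of the conditions, namely \eqref{consistent:covering}, \eqref{consistent:bounded}, and \eqref{consistent:packing}, actually mention $\core$ (via $\bigcap\core$), while \eqref{consistent:arborescence}, \eqref{consistent:branching}, \eqref{consistent:dupl}, and \eqref{consistent:growing} are purely combinatorial statements about $(\bnbnodes,\bnbedges,\bnbdec,\sigma)$ and hence carry over verbatim.

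For the conditions that do mention $\core$, the only thing I would use is the elementary set-theoretic fact that $\bigcap(\core\cup\{C\}) = C \cap \bigcap\core \subseteq \bigcap\core$. For \eqref{consistent:covering}: from $\bigcap\core \subseteq \bigcup_{v\in\children(u)}\bnbdec_v$ we immediately get $\bigcap(\core\cup\{C\}) \subseteq \bigcap\core \subseteq \bigcup_{v\in\children(u)}\bnbdec_v$, so the covering condition still holds. For \eqref{consistent:bounded}: since $\bigcap(\core\cup\{C\}) \subseteq \bigcap\core$, for every $v$ and $i\in[\ell_v]$ we have $\max\{\sigma_v(x)_i : x\in\bigcap(\core\cup\{C\})\} \le \max\{\sigma_v(x)_i : x\in\bigcap\core\} < \infty$ (with the convention that the max over the empty set is $-\infty < \infty$). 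For \eqref{consistent:packing}: this condition does not involve $\core$ at all — it only asserts $\sigma_u(\bnbdec_v)\cap\sigma_u(\bnbdec_w)=\emptyset$ for distinct children $v,w$ of $u$ — so it is unchanged.

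I do not expect any genuine obstacle here; the lemma is essentially a monotonicity remark and all three nontrivial cases reduce to the single inclusion $\bigcap(\core\cup\{C\})\subseteq\bigcap\core$. The only point requiring a moment of care is the empty-set convention in \eqref{consistent:bounded}, but since the inequality $-\infty<\infty$ holds, shrinking the feasible set can never violate an upper-boundedness statement. Hence $\bnbtree$ is $\core\cup\{C\}$-consistent. \qed
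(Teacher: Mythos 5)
Your proof is correct and follows essentially the same route as the paper's: both observe that \eqref{consistent:arborescence}, \eqref{consistent:branching}, \eqref{consistent:dupl}, \eqref{consistent:growing}, and \eqref{consistent:packing} do not involve $\core$, and reduce the remaining conditions \eqref{consistent:covering} and \eqref{consistent:bounded} to the inclusion $\bigcap(\core\cup\{C\})\subseteq\bigcap\core$. The only blemish is the slip in your opening sentence, where you count ``two'' conditions but list three and initially include \eqref{consistent:packing} among those mentioning $\core$ --- which you then correctly retract in the detailed argument.
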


\begin{proof}
  \eqref{consistent:arborescence}, \eqref{consistent:branching}, \eqref{consistent:dupl}, \eqref{consistent:growing},
  \eqref{consistent:packing} do not involve \core.
  Because $\bigcap\core$ is additionally intersected with $C$, the boundedness in
  \eqref{consistent:bounded} continues
  to hold.
  Condition \eqref{consistent:covering} follows immediately,
  as~$\bigcap(\core \cup \{C\}) \subseteq \bigcap\core$.
  \qed
\end{proof}

To be able to show that symmetry reductions for MIPs can be derived within
our framework, we require that a linear representation of the lexicographic
order can be derived from our framework, which is achieved by the following
result.

\begin{proposition}
  \label{prop:FD}
  Let~$\configdef$ be an~$(F,f)$-valid configuration with branching tree~$\bnbtree =
  (\bnbnodes, \bnbedges, \bnbdec, \sigma)$ such that all entries
  of~$\sigma_v$, $v \in V$, are positive.
  Let~$\gamma$ be a symmetry of~$(\bigcap\core, g)$ and~$L \leq U$ be integers.
  Let~$v \in \bnbnodes$ be such that~$x_i$, $i \in \sigma_v$, is an integer variable with~$L \leq x_i \leq U$.
  If~$\stricteps \leq 1$, via dominance-based strengthening, one can derive
  the validity of inequality
  \begin{align}
    \label{eq:FD}
    \sum_{i = 1}^{\ell_v} (U-L+1)^{\ell_v - 1} \sigma_v(x)_i
    \geq
    \sum_{i = 1}^{\ell_v} (U-L+1)^{\ell_v - 1} \sigma_v(\gamma(x))_i
  \end{align}
  at~$v$.
  That is, if~$C' = \{x \in \reals^n : x \text{
    satisfies~\eqref{eq:FD}}\}$, one can derive~$C = [\bradec_v
  \rightsquigarrow C']$.
\end{proposition}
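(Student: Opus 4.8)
The plan is to apply the dominance-based strengthening rule (\Cref{thm:dominance}) to the constraint $C = [\bradec_v \implication C']$ with witness $\omega = \gamma$. Unfolding $[\bradec_v \implication C'] = \complem{\bigcap\bradec_v}\cup C'$, the premise ``$x\in\bigcap(\core\cup\derived)$ and $x\notin C$'' of \eqref{eq:symruleimp} becomes ``$x\in\bigcap(\core\cup\bradec_v)$ and $x\notin C'$'', i.e.\ $x$ lies in the feasible region of node~$v$ and violates~\eqref{eq:FD}. Two of the three required consequences are immediate: since $\gamma$ is a symmetry of $(\bigcap\core,g)$, it maps $\bigcap\core$ bijectively onto itself, so $\gamma(x)\in\bigcap\core$, and $g(\gamma(x)) = g(x)\le g(x)$. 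This also makes $\dcn(x,\gamma(x))$ well-defined, as both $x$ and $\gamma(x)$ lie in $\bigcap\core$. It remains to prove $\gamma(x)\tgt x$.

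First I would translate ``$x\notin C'$'' into a statement about the lexicographic order. Because $\bigcap\core$ is contained in $\{y : y_i\in\integers,\ L\le y_i\le U \text{ for all } i\in\sigma_v\}$, and both $x$ and $\gamma(x)$ lie in $\bigcap\core$ while the entries of $\sigma_v$ are positive, every entry of $\sigma_v(x)$ and of $\sigma_v(\gamma(x))$ lies in $\{L,\dots,U\}$. Subtracting $L$ from each entry turns both vectors into length-$\ell_v$ digit strings in base $D+1$ with $D = U-L$, and the two sides of~\eqref{eq:FD} become, up to a common additive constant, the base-$(D+1)$ values of these digit strings; for equal-length digit strings the usual value comparison agrees with $\lexgeq$. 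Hence $x\notin C'$ means exactly $\sigma_v(\gamma(x))\lexgt\sigma_v(x)$, so these vectors differ, and at the smallest index~$t$ where they differ integrality gives $\sigma_v(\gamma(x))_t \ge \sigma_v(x)_t + 1 \ge \sigma_v(x)_t + \stricteps$ since $\stricteps\le 1$; that is, $\sigma_v(\gamma(x))\lexgteps\sigma_v(x)$.

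Finally I would transfer this from $v$ to $w\define\dcn(x,\gamma(x))$. Arguing as in \Cref{lemma:uniquechild,lemma:uniqueleaf} (using \eqref{consistent:covering} and \eqref{consistent:packing}), $x\in\bigcap(\core\cup\bradec_v)$ forces $v$ to lie on the root-to-$\solleaf(x)$ path, and so does $w$, hence $v$ and $w$ are comparable in $\bnbtree$. If $v$ is an ancestor of $w$ (or $v=w$), then $\sigma_v\sleq\sigma_w$ by~\eqref{consistent:growing}, so $\sigma_w(\cdot)$ extends $\sigma_v(\cdot)$ and, since $t\le\ell_v\le\ell_w$, the first difference of $\sigma_w(x)$ and $\sigma_w(\gamma(x))$ is again at $t$, giving $\sigma_w(\gamma(x))\lexgteps\sigma_w(x)$. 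If instead $w$ is a proper ancestor of $v$, then $\sigma_w\sleq\sigma_v$; here $t\le\ell_w$ must hold, since $t>\ell_w$ would yield $\sigma_w(x)=\sigma_w(\gamma(x))$, whereas $w=\dcn(x,\gamma(x))$ is not a leaf (it has the proper successor $v$) and therefore, by~\eqref{consistent:covering} and~\eqref{consistent:packing}, has at least two children into which $x$ and $\gamma(x)$ are routed distinctly, forcing $\sigma_w(x)\neq\sigma_w(\gamma(x))$. So again $\sigma_w(\gamma(x))\lexgteps\sigma_w(x)$, i.e.\ $\gamma(x)\tgt x$ by \Cref{def:constree}. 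With all hypotheses of~\eqref{eq:symruleimp} verified, \Cref{thm:dominance} certifies adding $C = [\bradec_v\implication C']$ to $\derived$. I expect the main obstacle to be precisely this last transfer step, in particular the sub-case where $\dcn(x,\gamma(x))$ is a proper ancestor of $v$, where one must exploit that a deepest common node which is not a leaf already separates $x$ and $\gamma(x)$ via disjoint $\sigma$-images.
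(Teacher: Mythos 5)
Your proof is correct, but it takes a genuinely different route from the paper's. You verify the hypothesis of \Cref{thm:dominance} in a single shot: after shifting by $L$, the two sides of \eqref{eq:FD} (with the intended exponent $\ell_v-i$; the constant exponent $\ell_v-1$ in the statement is a typo, cf.~\eqref{eq:FDgen}) are base-$(U-L+1)$ values of equal-length digit strings, so violating \eqref{eq:FD} is \emph{equivalent} to $\sigma_v(\gamma(x)) \lexgt \sigma_v(x)$; integrality and $\stricteps\le 1$ then upgrade this to $\lexgteps$, and your transfer to $\dcn(x,\gamma(x))$ via \eqref{consistent:growing} and \eqref{consistent:packing} (including the sub-case where the deepest common node is a proper ancestor of $v$) matches the paper's case analysis. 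The paper instead proves a stronger statement by induction: it derives the whole family of truncated inequalities $C^j_u$, $j\in[\ell_u]$, for $v$ and all its proper ancestors $u$, each by a separate application of dominance-based strengthening in which the previously derived constraints sit in $\derived$ and hence strengthen the precondition $x\in\bigcap(\core\cup\derived)$; the lexicographic conclusion is then extracted by an explicit geometric-sum estimation showing that all lower-order inequalities must be tight. The reason for this detour is announced just before the paper's proof: the argument is designed to be \emph{machine-verifiable}, with each case realized as a subproof using only aggregations and bound substitutions, whereas your ``value comparison of digit strings equals lexicographic comparison'' step is a meta-level equivalence that a simple verifier cannot check directly. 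In short, your proof is the shorter mathematical argument establishing the proposition as stated; the paper's version buys a decomposition into elementary, certifiable derivation steps, at the cost of carrying the auxiliary constraints in $\derived$ and deleting them afterwards.
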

We have designed the following proof in such a way that it is
machine-verifiable.
To keep the presentation simple, we do not mention explicitly how the
different steps can be verified.
But the idea is that case distinctions can be implemented as ``subproofs''
in which we add some assumptions characterizing the case.
By using simple arguments in estimations, such as replacing a variable by
their upper or lower bound, these subproofs either prove the desired
statement or lead to a contradiction, showing thus the opposite statement.
\begin{proof}
  To prove that we can derive~$C$ via dominance-based strengthening, we
  need to show that, if~$x \in \bigcap(\core\cup\derived)$ and~$x \notin C$, then
  there exists~$\omega\colon\reals^n\to\reals^n$ with~$g(\omega(x)) \leq
  g(x)$, $\omega(x) \in \bigcap\core$, and~$\omega(x) \tgt x$.
  We claim that~$\omega = \gamma$ serves as a witness.

  Instead of proving the statement just for~$C$, we show a stronger result.
  Let~$\Delta = U - L + 1$ and
  \begin{align}
    \label{eq:FDrec}
    \sum_{i = 1}^{k} \Delta^{k - 1} \sigma_u(x)_i
    &\geq
    \sum_{i = 1}^{k} \Delta^{k - 1} \sigma_u(\gamma(x))_i,
    &&
       k \in [\ell_v].
  \end{align}
  Let~$\hat{C}^j_v = \{x \in \reals^n : x \text{ satisfies~\eqref{eq:FDrec} for } k = j\}$ and~$C^j_v =
  [\bradec_v\rightsquigarrow \hat{C}^j_u]$.
  We claim that we can derive~$C^j_v$ via dominance-based strengthening.
  If we can establish this claim, the assertion follows as~$C =
  C^{\ell_v}_v$.

  To prove this claim, we proceed by induction.
  Our induction hypothesis is that we can show, for all proper
  ancestors~$u$ of~$v$, that we can derive~$C^j_u$ for all~$j \in
  [\ell_u]$, and for node~$v$, we can derive~$C^j_v$ for all~$j \in [k-1]$
  where~$k < \ell_v$.
  Our goal is to show that we can also derive~$C^k_v$.
  Note that the induction base case corresponds to~$k = 1$.
  That is, the proof of the base case and the inductive step is the same.
  Moreover, as~\eqref{eq:FDrec} is invariant under shifting the variable
  domains, we may assume w.l.o.g.\ that~$L = 0$, which will simplify
  notation in the following.

  In the inductive step, by the hypothesis, we may assume that, for all
  proper ancestors~$u$ of~$v$ and~$j \in [\ell_u]$, we have~$C^j_u \in
  \derived$, and~$C^j_v \in \derived$ for all~$j \in [k-1]$.\footnote{This
    assumption is only needed in the proof.
    In practice, one would remove the previously derived constraints
    from~$\derived$ via the deletion rule after adding~$C^k_v$.
  }
  To apply dominance-based strengthening,
  let~$\bar{x} \in \bigcap(\core\cup\derived)$ with~$\bar{x} \notin C^k_v$.
  Since~$\gamma$ is a symmetry of~$(\bigcap\core,g)$, we have~$\gamma(\bar{x})
  \in \bigcap\core$ and~$g(\gamma(\bar{x})) = g(\bar{x})$.
  Thus, it remains to show~$\gamma(\bar{x}) \tgt \bar{x}$.

  Let~$w = \dcn(\bar{x}, \gamma(\bar{x}))$.
  Following the definition of~$\tgt$,
  we need to show that~$\sigma_w(\gamma(\bar{x})) \lexgteps \sigma_w(\bar{x})$.
  Since~$\bar{x} \notin C^k_v$, we have~$\bar{x} \in \bigcap\bradec_v$
  and~$\bar{x} \notin \hat{C}^k_v$.
  By the former, $w$ and~$v$ lie on a common rooted path in~$\bnbtree$.
  We distinguish whether~$w$ is a proper ancestor of~$v$ or not.
  In the latter case, note that~$\sigma_w(\gamma(\bar{x})) \lexgteps \sigma_w(\bar{x})$
  follows from~\eqref{consistent:growing} if we can
  show~$\sigma_v(\gamma(\bar{x})) \lexgteps \sigma_v(\bar{x})$.

  First, suppose~$w$ is not a proper ancestor of~$v$.
  By the induction hypothesis, $C^j_u \in \derived$ for all~$j \in
  [\ell_u]$ and proper ancestors~$u$ of~$v$, as well as~$C^j_v \in
  \derived$ for all~$j \in [k-1]$.
  For all~$j \in [k-1]$,
  this in particular means that~$\bar{x} \in \hat{C}^j_v$, because~$\bar{x} \in
  \bigcap\bradec_v$.
  Suppose that there is one~$j' \in [k-1]$ such that the inequality
  in~$\hat{C}^{j'}_v$ is strictly satisfied.
  Among all such~$j'$, let~$j$ be the one that is minimal.
  Then, since all variables are non-negative by the assumption~$L = 0$,
  \begin{align*}
    \sum_{i = 1}^{k}\Delta^{k-i}\sigma_v(\bar{x})_i
    \geq&
      \sum_{i = 1}^{j-1}\Delta^{k-i}\sigma_v(\bar{x})_i
      +
      \Delta^{k-j} \sigma_v(\bar{x})_j\\
    =&
      \sum_{i = 1}^{j-1}\Delta^{k-i}\sigma_v(\gamma(\bar{x}))_i
      +
      \Delta^{k-j} \sigma_v(\bar{x})_j\\
    \geq&
      \sum_{i = 1}^{j-1}\Delta^{k-i}\sigma_v(\gamma(\bar{x}))_i
      +
      \Delta^{k-j} \sigma_v(\gamma(\bar{x}))_j
      + \Delta^{k-j}\\
    \geq&
      1 + \sum_{i = 1}^{k}\Delta^{k-i}\sigma_v(\gamma(\bar{x}))_i.
  \end{align*}
  Here, the first inequality is due to~$L = 0$.
  The third inequality holds as geometric sums satisfy~$\Delta^{k-j} = 1 + \sum_{i=j+1}^k(\Delta
  - 1)\Delta^i$, and~$\sigma_v(\gamma(\bar{x}))_j
  \leq (U - L) = {\Delta - 1}$.
  The second estimation holds as~$j$ is the smallest index for
  which~\eqref{eq:FDrec} is a strict inequality; its inequality~\eqref{eq:FDrec} thus reduces
  to~$\sigma_v(\bar{x})_j > \sigma_v(\gamma(\bar{x}))_j$, which implies~$\sigma_v(\bar{x})_j
  \geq 1 + \sigma_v(\gamma(\bar{x}))_j$ by integrality.

  This inequality chain, however, is a contradiction to~$\bar{x} \notin C^k_v$.
  Consequently, all inequalities in~$\hat{C}^1_v,\dots,\hat{C}^{k-1}_v$
  must be satisfied by~$\bar{x}$ with equality.
  The inequality in~$\hat{C}^k_v$ then reduces to~$\sigma_v(x)_k \geq
  \sigma_v(\gamma(x))_v$.
  As such, since~$\bar{x} \notin \hat{C}^k_v$, we have~$\sigma_v(\gamma(\bar{x}))_k 
  > \sigma_v(\bar{x})_k$.
  Again, by integrality, this implies
  \[
    \sigma_v(\gamma(\bar{x}))_k
    \geq
    1 + \sigma_v(\bar{x})_k
    \geq
    \stricteps + \sigma_v(\bar{x})_k.
  \]
  As a consequence, $\sigma_v(\gamma(\bar{x})) \lexgteps \sigma_v(\bar{x})$
  follows.

  It remains to consider the case that~$w$ is a proper predecessor of~$v$.
  In this case, $\bar{x}$ and~$\gamma(\bar{x})$ must be feasible at
  different children of~$w$ as~$w$ is the deepest common node.
  By~\eqref{consistent:packing}, $\sigma_w(\gamma(\bar{x}))_{\ell_w} < \sigma_w(\bar{x})_{\ell_w}$
  or~$\sigma_w(\gamma(\bar{x}))_{\ell_w} > \sigma_w(\bar{x})_{\ell_w}$.
  In the former case, exactly the same arguments as above can be used to
  show
  \[
    \sum_{i = 1}^{k}\Delta^{k-i}\sigma_v(\bar{x})_i
    \geq
    \sum_{i = 1}^{k}\Delta^{k-i}\sigma_v(\gamma(\bar{x}))_i,
  \]
  as~$\sigma_v$ is an extension of~$\sigma_w$
  by~\eqref{consistent:growing}, which is a contradiction to~$\bar{x} \notin C^k_v$.
  In the latter case, we find (again using the same arguments)
  that~$\sigma_w(\gamma(\bar{x})) \lexgteps \sigma_w(\bar{x})$.
  After considering all cases, $\gamma(\bar{x}) \tgt \bar{x}$ thus follows.
  \qed
\end{proof}

\subsection{Technical Proofs}

This section provides the missing technical proofs of the rules that allow
to adapt configurations while preserving their~$(F,f)$-validity.

\subsubsection{Proof of \Cref{thm:implic} (Implicational Derivation Rule)}

Let $\config=\configdef$ be the previously given configuration.
\eqref{cond:consistent}, \eqref{cond:obj}, and~\eqref{cond:feasC}  are trivially
preserved since they do not depend on the set of derived constraints.
For \eqref{cond:derive}, let $x \in \bigcap\core$ with $g(x) < \primbound$.
By validity of \config, there exists a $y \in \bigcap (\core \cup \derived)$ with $y
\tgeq x$ and $g(y) \leq g(x) < \primbound$.
It suffices to show $y \in [\assumed\implication C] = \complem{\bigcap\assumed}
\cup C$.
If $y\not\in\bigcap\assumed$, this holds trivially.
If $y\in\bigcap\assumed$, then Condition \eqref{cond:implic} implies
\[
  y\in \bigcap (\core \cup \derived \cup \assumed) \cap \{ x \in \reals^n :
  g(x) < \primbound \} \subseteq C \subseteq [\assumed\implication C].
\]
In either case, \eqref{cond:derive} is satisfied for the updated configuration.
\qed

\subsubsection{Proof of \Cref{cor:resolution} (Resolution Rule)}

First, careful rewriting yields
\begin{align*}
  \bigcap(\core\cup\derived)
  &\subseteq
  \left[\left(\assumed_1 \cup \{A_1\}\right) \implication C_1\right]
  \cap
  \left[\left(\assumed_2 \cup \{A_2\}\right) \implication C_2\right]\\
  &=
  \Big(\,\complem{\bigcap(\assumed_1 \cup \{A_1\})} \cup C_1\Big)
  \cap
  \Big(\,\complem{\bigcap(\assumed_2 \cup \{A_2\})} \cup C_2\Big)\\
  &\subseteq
  \Big(\,\complem{\bigcap(\assumed_1 \cup \{A_1\})} \cup C_1 \cup C_2\Big)
  \cap
  \Big(\,\complem{\bigcap(\assumed_2 \cup \{A_2\})} \cup C_1 \cup C_2\Big)\\
  &=
  \Big(\,\complem{\bigcap(\assumed_1 \cup \{A_1\})}
  \,\cap\,
  \complem{\bigcap(\assumed_2 \cup \{A_2\})} \Big)
  \cup (C_1 \cup C_2)\\
  &=
  \Big(\Big(\,\complem{\bigcap\assumed_1} \,\cup\, \complem{A_1} \Big)
  \,\cap\,
  \Big(\,\complem{\bigcap\assumed_2} \,\cup\, \complem{A_2} \Big) \Big)
  \cup (C_1 \cup C_2).
\end{align*}
By this, every $x\in\bigcap(\core\cup\derived)\subseteq A_1\cup A_2$ is
contained in at least one of the following three sets:
in $C_1\cup C_2$ or in $\complem{\bigcap\assumed_1}$
(if $x\in A_1\setminus(C_1\cup C_2)$) or in $\complem{\bigcap\assumed_2}$ (if $x\in A_2\setminus(C_1\cup C_2)$),
i.e.,
\begin{align*}
  x
  \in
  \complem{\bigcap\assumed_1} \,\cup\, \complem{\bigcap\assumed_2}
  \cup (C_1 \cup C_2)
  =
  \left[(\assumed_1 \cup \assumed_2) \implication (C_1 \cup C_2)\right] \eqqcolon C.
\end{align*}
Hence, $C$ satisfies \eqref{cond:implic} for $\assumed=\emptyset$ and
any value of $\primbound$ (including the value in the current configuration),
and can be added to $\derived$, preserving
validity of the configuration.
\qed

\subsubsection{Proof of \Cref{thm:objbound} (Objective Bound Update Rule)}

Let $\config=\configdef$ be the previously given configuration.
\eqref{cond:consistent} is preserved because it does not involve the objective bound.
Since $\primbound' < \infty$, for \eqref{cond:obj} we need to show that there exists an
$x\in\feasregion$ with $f(x) \leq \primbound'$.
This is a direct consequence of \eqref{cond:feasC} for \config.
\eqref{cond:feasC}   and  \eqref{cond:derive}   continue   to  hold   for  the   updated
configuration because the objective bound becomes stricter and thus their statements become weaker.
\qed

\subsubsection{Proof of \Cref{thm:objfunc} (Objective Function Update Rule)}

Assume that the previously given configuration is $\config=\configdef$.
\eqref{cond:consistent} and \eqref{cond:obj} are not affected by the update of $g$.
For~\eqref{cond:feasC} let $\hat\primbound < \primbound$.

First, assume we have $x\in\feasregion$ with $f(x) \leq \hat\primbound$.
\eqref{cond:feasC} for \config ensures that there also exists $\hat x\in\bigcap\core$ with
$g(\hat x) \leq \hat\primbound < \primbound$, and \eqref{cond:derive} for \config gives us a
$y\in\bigcap(\core\cup\derived)$ with $g(y) \leq g(\hat x) \leq \hat\primbound$.
The condition of the theorem implies that $g'(y)=g(y)$.
All in all, $y\in\bigcap\core$ and $g'(y) \leq \hat z$, so the forward implication in
\eqref{cond:feasC} is preserved.

For the reverse direction, assume we have $x\in\bigcap\core$ with $g'(x) \leq
\hat\primbound < \primbound$.
Then $g(x) = g'(x) \leq \hat\primbound$ holds by the condition of the theorem.
Now \eqref{cond:feasC} for \config ensures that there exists $x\in\feasregion$ with
$f(x) \leq \hat\primbound$.

To show \eqref{cond:derive} for the updated configuration, let $x \in \bigcap\core$ with
$g'(x) < \primbound$.
Again, the condition of the theorem implies $g(x) = g'(x) < \primbound$.
\eqref{cond:derive} for \config yields a $y \in \bigcap (\core \cup \derived)$ with $y
\tgeq x$ and $g(y) \leq g(x)$.
Finally $g'(y) = g(y) \leq g(x) = g'(x)$ follows from the condition of the theorem.
Hence, \eqref{cond:derive} also holds for the updated configuration.
\qed

\subsubsection{Proof of \Cref{thm:epsupdate} (Epsilon Shrinkage Rule)}

\eqref{cond:consistent} is trivially satisfied due to $\stricteps' > 0$.
\eqref{cond:obj} and \eqref{cond:feasC} are not affected when changing $\stricteps$ to
$\stricteps'$.
\eqref{cond:derive} becomes a weaker statement due to $\stricteps' < \stricteps$.
\qed

\subsubsection{Proof of \Cref{thm:transfer} (Transfer Rule)}

Let $\config=\configdef$ be the previously given configuration.
\eqref{cond:consistent} is preserved according to \Cref{lem:treecons},
\eqref{cond:obj} does not depend on \core and \derived, and
\eqref{cond:derive} is preserved because $\bigcap(\core\cup\{C\}) \subseteq
\bigcap\core$ and
\[
  \bigcap\big((\core\cup\{C\})\cup(\derived\setminus\{C\}) \big) =
  \bigcap(\core\cup\derived).
\]
For~\eqref{cond:feasC} let $\primbound' < \primbound$.
First, assume we have $x\in\feasregion$ with $f(x) \leq \primbound'$.
\eqref{cond:feasC} for \config yields $x'\in\bigcap\core$ with $g(x') \leq \primbound' <
z$, and \eqref{cond:derive} for \config gives us a $y\in\bigcap(\core\cup\derived)
\subseteq \bigcap(\core\cup\{C\})$ with $g(y) \leq g(x') \leq \primbound'$.
Hence, the forward implication in \eqref{cond:feasC} is preserved.
For the reverse direction, assume we have $x\in\bigcap(\core\cup\{C\}) \subseteq
\bigcap\core$ with $g(x) \leq \primbound'$.
Now \eqref{cond:feasC} for \config directly yields an $x\in\feasregion$ with $f(x) \leq
\primbound'$.

\qed

\subsubsection{Proof of \Cref{thm:delete} (Deletion Rule)}

Let $\config=\configdef$ be the previously given configuration.
For (a), \eqref{cond:consistent}, \eqref{cond:obj}, and \eqref{cond:feasC} are invariant.
\eqref{cond:derive} becomes weaker, since
$\bigcap(\core\cup\derived')\supseteq\bigcap(\core\cup\derived)$.

For (b) and (c), it suffices to show that $\big( \core', \derived, g, \primbound,
\bnbtree, \stricteps \big)$ is valid; then the validity of the derived constraint deletion
follows from~(a).
The proof for (b) is trivial, since $\bigcap\core'$ and $\bigcap\core$ define the same set.

For proving~(c), note that \eqref{cond:obj} is invariant, and \eqref{cond:consistent} is trivially preserved since all $\sigma_v$ are empty and the tree cannot have nodes with more than one child (and $\epsilon>0$ is unchanged).

The forward direction of \eqref{cond:feasC} is trivial, since
$\bigcap\core\subseteq\bigcap\core'$.
For the reverse direction, we need to show that for any $\hat z < z$, if $x \in \bigcap
\core'$ with $g(x) \le \hat z$ then there exists $y \in \feasregion$ with $f(y) \le
\hat z$.
If $x\in C$, then $x\in\bigcap\core$ and \eqref{cond:feasC} for $\config$ guarantees us a
suitable~$y$.
If $x\not\in C$, then let $\omega$ be the witness provided alongside the application of
the redundance-based strengthening rule.
By~\eqref{eq:redruleimp} applied to~$\big(\core', \emptyset, g, z, \bnbtree, \stricteps \big)$, \ie with $\core=\core'$ and
$\derived=\emptyset$, we know that $\omega(x)\in \bigcap(\core'\cup\{C\})=\bigcap\core$
and $g(\omega(x)) \leq g(x) \leq \hat z$.
Now again, \eqref{cond:feasC} for $\config$ guarantees us a suitable~$y$.

For \eqref{cond:derive}, if $x \in C$ then the condition immediately follows from the validity of the previous configuration $\config$ since $\bigcap(\core'\cup\derived) \supseteq \bigcap(\core\cup\derived)$. If $x \notin C$, then we can once again apply $\omega(x)\in\bigcap\core$ and $g(\omega(x)) \leq g(x) \leq \hat z$, hence the condition immediately follows from \eqref{cond:derive} for $\config$.
\qed

\subsubsection{Proof of \Cref{thm:treeexchange} (Tree Exchange Rule)}

\eqref{cond:consistent} holds because $\bnbtree'$ is assumed to be
\Cconsistent and $\stricteps > 0$.
\eqref{cond:obj} and \eqref{cond:feasC} are independent of the tree.
Due to $\derived=\emptyset$ and the reflexivity of the preorders,
\eqref{cond:derive} holds trivially for $y=x$, independently of the
tree.
\qed

\subsubsection{Proof of \Cref{thm:dimext} (Dimension Extension Rule)}

\eqref{cond:consistent} and \eqref{cond:obj} are invariant under extension of the
dimension.
\eqref{cond:feasC} and \eqref{cond:derive} on the new configuration are equivalent to
\eqref{cond:feasC} and \eqref{cond:derive} on the previous configuration after projecting
out the newly added variable.
\qed

\subsection{Auxiliary examples of verification}
\label{sec:appendix3}
We provide several more detailed examples of automatic verification of cutting planes.

\paragraph*{Chv\'atal-Gomory cuts~\cite{Chvatal1973}.}
Given a valid inequality $\sprod{\alpha}{x} \le \beta$ with $\alpha_i = 0$, for all $i > p$, it is valid to round down all coefficients, while also rounding down the right-hand-side, \ie $\sum_{i=1}^p \down{\alpha_i}x_i \le \down{\beta}$ is valid. This is a special case of a disjunctive cut, where the disjunction is given by $\sum_{i=1}^p \down{\alpha_i}x_i \le \down{\beta} \lor \sum_{i=1}^p \down{\alpha_i}x_i \ge \down{\beta} + 1$ and the greater or equal part of the disjunction contains no points in the feasible region. In the previously existing certificate format for MIP called VIPR~\cite{VIPR}, CG-cuts are directly verified by checking the correctness of the rounding, as well as the integrality requirements. We point out that the interpretation as split cuts allows to verify a CG-cut without knowing the original inequality $\sprod{\alpha}{x} \le \beta$, by solving an auxiliary LP to find the correct Farkas multipliers to show infeasibility of $\sum_{i=1}^p \down{\alpha_i}x_i \ge \down{\beta} + 1$.

\paragraph*{Knapsack cover cuts~\cite{Balas1975}.}
Given a knapsack constraint $\sum_{j=1}^n a_j x_j \le b$, where $x \in \{0,1\}^n, b \in \integers_{>0}, a_j \in \integers_{>0}$ for all~$j \in [n]$, a \emph{cover} is a subset $C \subseteq [n]$ such that $\sum_{j \in C} a_j > b$. The corresponding cover inequality is $\sum_{j \in C} x_j \le |C| - 1$.

Any cover inequality can be verified as a split cut. We define the split disjunction $\sum_{j \in C} x_j \le |C| -1 \lor \sum_{j \in C} x_j \ge |C|$. For the first part of the disjunction, the cover inequality is trivially valid. For the second part, $\sum_{j \in C} x_j \ge |C|$ can be used to derive $x_j \ge 1$ for every~$j \in C$ by a simple aggregation proof. Then $x_j \ge 1$ for all~$j \in C$ can be aggregated using multipliers $a_j$ to derive $\sum_{j \in C} a_j x_j \ge \sum_{j \in C} a_j > b$, which is a trivially verifiable contradiction to the knapsack constraint. Applying the resolution rule certifies the cover inequality as valid.

\paragraph*{Flowcover cuts~\cite{Padberg1985}.}

Flowcover cuts make use of a structure commonly found inside a MIP model, namely single-node flow sets, defined as
\begin{align*}
  T \coloneqq \left\{(x,y) \in \{0,1\}^n \times \reals^n_+ : \sum_{j=1}^n y_j \le b, y_j \le a_jx_j \text{ for } j \in [n] \right\}
\end{align*}
with $0 < a_j \le b$ for all $j \in [n]$. A set $C \subseteq \{1,\ldots,n\}$ is a \emph{flow cover} of $T$ if $\sum_{j \in C} a_j > b$.
We denote by~$(\cdot)^+ = \max\{0, \cdot\}$.
The corresponding flow cover inequality is $\sum_{j \in C} y_j + \sum_{j \in C}(a_j - \lambda)^+(1-x_j) \le b$, where $\lambda = \sum_{j \in C} a_j - b$.

We can verify a flow cover inequality as a split cut with the disjunction $\sum_{j \in C, a_j \ge \lambda}(1-x_j) \le 0 \lor \sum_{j \in C, a_j \ge \lambda}(1-x_j) \ge 1$. In the first case, the inequality reduces to $\sum_{j=1}^n y_j \le b$, which is valid by definition of $T$. In the second case, we can derive the following intermediate inequality:
\begin{align}
  \label{eq:flowcover-inter1}
  & \sum_{j \in C} (a_j-\lambda)^+(1-x_j) = \sum_{j \in C, a_j \ge \lambda} a_j(1-x_j) - \lambda\sum_{j \in C, a_j \ge \lambda}(1-x_j) \\
   &\le \sum_{j \in C} a_j(1-x_j) - \lambda = b - \sum_{j \in C} a_j x_j
   \le \left(b - \sum_{j \in C} a_j x_j\right)^+ \notag
\end{align}

The inequality holds, since $\sum_{j \in C, a_j \ge \lambda}(1-x_j) \ge 1$ by assumption, and since $a_j (1-x_j) \ge 0$.
Then we derive the flow cover inequality using the additional disjunction $\sum_{j \in C} a_jx_j \le b \lor \sum_{j \in C} a_jx_j \ge b + 1$.
In the first case, we find $\sum_{j \in C} y_j \le \sum_{j \in C} a_jx_j = b - (b-\sum_{j \in C} a_jx_j)^+$, and adding the intermediate inequality \eqref{eq:flowcover-inter1} gives the flow cover inequality. In the second case, the inequality $\sum_{j \in C} y_j \le b = b - (b-\sum_{j \in C} a_jx_j)^+$ holds and again adding the intermediate inequality \eqref{eq:flowcover-inter1} gives the flow cover inequality.

\paragraph*{Reduced cost fixing.}

Assume we solved the LP relaxation of (P), with optimal objective value $z_{MIP}$, and assume that we already found an integer feasible solution with objective value $z_I < \infty$. Let $x_j$ be a nonbasic variable at its lower bound, and for the sake of simplicity assume that $x_j \ge 0$ with reduced cost $\bar c_j \neq 0$. Then reduced cost fixing~\cite{conforti2014integer} states that the following inequality is valid:
\begin{equation*}
  x_j \le \down{\frac{z_{MIP}-z_{LP}}{\bar c_j}}
\end{equation*}
Assume that $x_j$ is an integer variable, and for the sake of simplicity, assume that all variables $x_i \ge 0$. The reduction of reduced cost fixing can be proven in the setting of our proof system using the implication rule. We make a simple aggregation proof, adding $\sprod{c}{x} \le z_{MIP}$ and $-(\sprod{y}{A})x \le z_{LP}$, where $y$ are the dual multipliers of the LP solution.
This yields~$(c-\sprod{y}{A})x \le z_{MIP}-z_{LP}$. Since the LP has been solved to optimality, we can aggregate out all variables except~$x_j$ and the inequality stays valid. Dividing by $\bar c_j = c_j - \sprod{y}{A_j}$ yields the desired inequality.

An analogous construction can be used to prove the validity of reduced cost fixing for variables at their upper bound.

\end{document}